\providecommand{\U}[1]{\protect\rule{.1in}{.1in}}
\begin{document}

\title{On URANS Congruity with Time Averaging:\\Analytical laws suggest improved models }
\author{William Layton \thanks{%
Department of Mathematics, University of Pittsburgh, Pittsburgh, PA 15260,
USA, wjl@pitt.edu; The research herein was partially supported by NSF\
grant DMS 1817542.} \and Michael McLaughlin 
\thanks{%
Department of Mathematics, University of Pittsburgh, Pittsburgh, PA 15260,
USA, mem266@pitt.edu; The research herein was partially supported by NSF grant DMS 1817542.}}
\date{\today}
\maketitle


\begin{abstract}
The standard $1-$equation model \ of turbulence was first derived by Prandtl
and has evolved to be a common method for practical flow simulations. Five
fundamental laws that any URANS model should satisfy are%
\[%
\begin{array}
[c]{ccc}%
\text{\textbf{1.}} & \text{Time window:} &
\begin{array}
[c]{c}%
\tau\downarrow0\text{ implies }v_{\text{{\small URANS}}}\rightarrow
u_{\text{{\small NSE}}}\text{ \&}\\
\text{ }\tau\uparrow\text{implies }\nu_{T}\uparrow
\end{array}
\\
\text{\textbf{2.}} & \text{ \ }l(x)=0\ \text{at walls:} & l(x)\rightarrow
0\text{ as }x\rightarrow walls,\\
\text{\textbf{3.}} & \text{ Bounded energy:} & \sup_{t}\int\frac{1}%
{2}|v(x,t)|^{2}+k(x,t)dx<\infty\\
\text{\textbf{4.}} &
\begin{array}
[c]{c}%
\text{Statistical }\\
\text{equilibrium:}%
\end{array}
& \lim\sup_{T\rightarrow\infty}\frac{1}{T}\int_{0}^{T}\varepsilon
_{\text{model}}(t)dt=\mathcal{O}\left(  \frac{U^{3}}{L}\right)  \\
\text{\textbf{5.}} &
\begin{array}
[c]{c}%
\text{Backscatter}\\
\text{possible:}%
\end{array}
& \text{(without negative viscosities)}%
\end{array}
\]
This report proves that a \textit{kinematic} specification of the model's
turbulence lengthscale by
\[
l(x,t)=\sqrt{2}k^{1/2}(x,t)\tau\text{ },
\]
where $\tau$\ is the time filter window, results in a $1-$equation model
satisfying Conditions 1,2,3,4 without model tweaks, adjustments or wall
damping multipliers.

\end{abstract}

\section{Introduction}

URANS (\textit{unsteady Reynolds averaged Navier-Stokes}) models of turbulence
are derived\footnote{URANS models are also constructed ad hoc simply by adding
$\frac{\partial v}{\partial t}$\ to a RANS model without regard to where the
term originates. Formulation via averaging over a finite time window is a
coherent source for the term.} commonly to produce a velocity, $v(x,t)\simeq
\overline{u}(x,t)$, that approximates a finite time window average of the
Navier-Stokes velocity $u(x,t)$%
\begin{equation}
\overline{u}(x,t)=\frac{1}{\tau}\int_{t-\tau}^{t}u(x,t^{\prime})dt^{\prime
}.\label{eqTimeMean}%
\end{equation}
From this connection flows 5 fundamental conditions (below) that a coherent
URANS\ model should satisfy and that few do. Herein we delineate these
conditions and show that, for the standard $1-$equation model, a new kinematic
turbulence length scale results in a simpler model satisfying 4 of the 5.

The first condition is a simple observation that the time window $\tau$ should
influence the model, as $\tau\rightarrow0$ the model should revert to the NSE
(Navier-Stokes equations) and as $\tau$\ increases, more time scales are
filtered and thus the eddy viscosity should increase.

\textbf{Condition 1:} \textit{The filter window }$\tau$\textit{\ should appear
as a model parameter. As }$\tau\rightarrow 0$\textit{\ the model reverts to the
NSE. As }$\tau$\textit{\ increases, the model eddy viscosity }$\nu_{T}(\cdot
)$\textit{\ increases.}

We consider herein $1-$equation models of turbulence. These have deficiencies
but nevertheless include models considered to have good predictive accuracy
and low cost, e.g., Spalart \cite{S15} and Figure 2 p.8 in Xiao and Cinnella
\cite{XC18}. The standard $1-$equation model (from which all have evolved),
introduced by Prandtl \cite{P45}, is%
\begin{gather}
v_{t}+v\cdot\nabla v-\nabla\cdot\left(  \left[  2\nu+\mu l\sqrt{k}\right]
\nabla^{s}v\right)  +\nabla p=f(x)\text{, }\nonumber\\
\nabla\cdot v=0\text{, }\label{eq:1EqnModel}\\
k_{t}+v\cdot\nabla k-\nabla\cdot\left(  \left[  \nu+\mu l\sqrt{k}\right]
\nabla k\right)  +\frac{1}{l}k\sqrt{k}=\mu l\sqrt{k}|\nabla^{s}v|^{2}%
\text{.}\nonumber
\end{gather}
Briefly, $p(x,t)$ is a pressure, $f(x)$ is a smooth, divergence free
($\nabla\cdot f=0$) body force, $\mu\simeq0.55$ is a calibration
parameter\footnote{Pope \cite{Pope} calculates the value $\mu=0.55$ from the
($3d$) law of the wall. An analogy with the kinetic theory of gasses (for
which $\nu_{T}=\frac{1}{3}lU$) yields the value $\mu=\frac{1}{3}\sqrt{2/d}$
which gives $\mu\simeq0.33$\ in 2d and $\mu\simeq0.27$\ in $3d$, Davidson
\cite{D15} p. 114, eqn. (4.11a).}, $\nabla^{s}v=(\nabla v+\nabla^{T}v)/2$ is
the deformation tensor, and $k(x,t)$ is the model approximation to the
fluctuations' kinetic energy distribution, $\frac{1}{2}|(u-\overline
{u})(x,t)|^{2}$. The eddy viscosity coefficient
\[
\nu_{T}(\cdot)=\mu l\sqrt{k}%
\]
(the Prandtl-Kolmogorov formula) is a dimensionally consistent expression of
the observed increase of mixing with turbulence and of the physical idea of
Saint-Venant \cite{S43} that this mixing increases with "\textit{the intensity
of the whirling agitation}", \cite{Darrigol}, p.235. The $k-$equation
describes the turbulent kinetic energy evolution; see \cite{CL} p.99, Section
4.4, \cite{D15}, \cite{MP} p.60, Section 5.3 or \cite{Pope} p.369, Section
10.3,\ for a derivation. The model (\ref{eq:1EqnModel}) holds in a flow domain
$\Omega$\ with initial conditions, $v(x,0)$ and $k(x,0)$, and (here
$L-$periodic or no-slip) $v,k$ boundary conditions on the boundary
$\partial\Omega$.

The parameter of interest herein is the turbulence length-scale $l=l(x)$,
first postulated by Taylor in 1915 \cite{T15}. It varies from model to model,
flow subregion to subregion (requiring fore knowledge of their locations,
\cite{S15}) and must be specified by the user; see \cite{Wilcox} for many
examples of how $l(x)$\ is chosen in various subregions. The simplest case is
channel flow for which%
\[
l_{0}(x)=\min\{0.41y,0.082\mathcal{R}e^{-1/2}\}
\]
where $y$\ is the wall normal distance, Wilcox \cite{Wilcox} Ch. 3, eqn.
(3.99) p.76.

Model solutions are approximations to averages of velocities of the
incompressible Navier-Stokes equations. Other fundamental physical properties
of NSE solutions (inherited by averages) should also be preserved by the
model. These properties include:

\textbf{Condition 2:} \textit{The turbulence length-scale }$l(x)$%
\textit{\ must }$l(x)\rightarrow0$\textit{\ as }$x\rightarrow walls$\textit{.}

Condition 2 follows since the eddy viscosity term approximates the Reynolds
stresses and%
\[
\mu l\sqrt{k}\nabla^{s}v\simeq u^{\prime}u^{\prime}\text{ which }%
\rightarrow0\text{ at walls like }\mathcal{O}(\text{\textit{wall-distance}%
}^{2}).
\]
Specifications of $l(x)$ violating this are often observed to over-dissipate
solutions (in many tests and now with mathematical support \cite{P17}).

\textbf{Condition 3:}\textit{\ (Finite kinetic energy) The model's
representation of the total kinetic energy in the fluid must be uniformly
bounded in time:}%
\[
\int_{\Omega}\frac{1}{2}|v(x,t)|^{2}+k(x,t)dx\leq Const.<\infty\text{
uniformly in time.}%
\]
The kinetic energy (per unit volume)$\frac{1}{|\Omega|}\int\frac{1}{2}%
|u|^{2}dx$, is distributed between means and fluctuations in the model as%
\[
\frac{1}{|\Omega|}\int_{\Omega}\frac{1}{2}|v(x,t)|^{2}+k(x,t)dx\simeq\frac
{1}{|\Omega|}\int_{\Omega}\frac{1}{2}|u(x,t)|^{2}dx<\infty.
\]
This property for the NSE represents the physical fact that bounded energy
input does not grow to unbounded energy solutions.

\textbf{Condition 4:}\textit{\ (Time-averaged statistical equilibrium) The
time average of the model's total energy dissipation rate, }$\varepsilon
_{\text{model}}$\textit{\ (\ref{eq:ModelEnergyDissipationRate}) below, should
be at most the time average energy input rate:}%
\[
\lim\sup_{T\rightarrow\infty}\frac{1}{T}\int_{0}^{T}\varepsilon_{\text{model}%
}(t)dt\leq Const.\frac{U^{3}}{L}\text{, uniformly in }\mathcal{R}e\text{.}%
\]
The most common failure model for turbulence models is over-dissipation.
Condition 4 expresses aggregate non-over-dissipatiopn. The energy dissipation
rate is a fundamental statistic of turbulence, e.g., \cite{Pope}, \cite{V15}.
This balance is observed in physical experiments \cite{Frisch}, \cite{V15} and
has been proven for the NSE, \cite{DC92}, \cite{DF02}, \cite{DG}.

The fifth condition is that the model allows an intermittent flow of energy
from fluctuations back to means. This energy flow is important, e.g.
\cite{Starr}, \cite{VGF94}, less well understood and not addressed herein; for
background see \cite{JL16}.

\textbf{Condition 5:}\textit{\ The model allows flow of energy from
fluctuations back to means without negative eddy viscosities. This energy flow
has space time average zero. }

To develop Conditions 3 and 4, multiple the $v-$equation (\ref{eq:1EqnModel})
by $v$ and integrate over $\Omega$. Add to this the $k-$ equation integrated
over $\Omega$. After standard manipulations and cancellations of terms there
follows the model's global energy balance%
\begin{gather}
\frac{d}{dt}\int_{\Omega}\frac{1}{2}|v(x,t)|^{2}+k(x,t)dx+\int_{\Omega}%
2\nu|\nabla^{s}v(x,t)|^{2}+\frac{1}{l(x)}k^{3/2}%
(x,t)dx\label{eq:StandardMoelEnergyBalance}\\
=\int_{\Omega}f(x)\cdot v(x,t)dx.\nonumber
\end{gather}
Thus, for the $1-$equation model we have (per unit volume)%
\begin{align}
\text{Kinetic energy }  &  \text{=}\text{ }\frac{1}{|\Omega|}\int_{\Omega
}\frac{1}{2}|v(x,t)|^{2}+k(x,t)dx,\nonumber\\
\text{Dissipation rate }\varepsilon_{\text{model}}(t)  &  =\frac{1}{|\Omega
|}\int_{\Omega}2\nu|\nabla^{s}v(x,t)|^{2}+\frac{1}{l(x)}k^{3/2}(x,t)dx
\label{eq:ModelEnergyDissipationRate}%
\end{align}

\textbf{The standard }$\mathbf{1-}$\textbf{equation model has difficulties
with all 5 conditions.} Conditions 1 and 5 are clearly violated. The second,
$l(x)\rightarrow0$\textit{\ at walls}, is not easily enforced for complex
boundaries; it is further complicated in current models, e.g., Spalart
\cite{S15}, Wilcox \cite{Wilcox}, by requiring user input of (unknown)
subregion locations where different formulas for $l(x)$\ are used. Conditions
3 and 4 also seem to be unknown for the standard model; they do not follow
from standard differential inequalities due to the mismatch of the powers of
$k$ in the energy term and the dissipation term.

\textbf{The correction herein is a kinematic }$l(x,t)$\textbf{.} We prove
herein that \textit{a kinematic\footnote{This can also be argued to be a
\textit{dynamic} choice since the estimate of $|u^{\prime}|$ \ in $l(x,t)$\ is
calculated from an (approximate) causal law. } turbulence length-scale
enforces Condition 1,2,3 and 4} as well as simplifying the model. In its
origin, the turbulence length-scale (then called a \textit{mixing length}) was
an analog to the mean free pass in the kinetic theory of gases. It represented
the distance two fluctuating structures must traverse to interact. Prandtl
\cite{P26} in 1926 also mentioned a second possibility:

\begin{center}
\textit{... the distance traversed by a mass of this type before it becomes
blended in with neighboring masses...}.
\end{center}

The idea expressed above is ambiguous but can be interpreted as suggesting
$l=|u^{\prime}(x,t)|\tau$, i.e., the \textit{distance a fluctuating eddy
travels in one time unit.} This choice means to select a turbulence time scale
$\tau$ (e.g., from (\ref{eqTimeMean})) and, as $|u^{\prime}|\simeq\sqrt
{2}k(x,t)^{1/2}$, define\footnote{The $k-$equation and a weak maximum
principle imply $k(x,t)\geq0$, following \cite{WF02}, \cite{LM93}. Thus,
$k^{1/2}$ is well defined.} $l(x,t) $ \textit{kinematically} by
\begin{equation}
l(x,t)=\sqrt{2}k(x,t)^{1/2}\tau. \label{eq:DynamicMixingLength}%
\end{equation}
With this choice the \textit{time window }$\tau$\textit{\ enters into the
model}. To our knowledge, (\ref{eq:DynamicMixingLength}) is little developed.
Recently in \cite{JL14b} the idea of $l=|u^{\prime}|\tau$ has been shown to
have positive features in ensemble simulations. With
(\ref{eq:DynamicMixingLength}), the model (\ref{eq:1EqnModel}) is modified to%
\begin{gather}
v_{t}+v\cdot\nabla v-\nabla\cdot\left(  \left[  2\nu+\sqrt{2}\mu k\tau\right]
\nabla^{s}v\right)  +\nabla p=f(x)\text{, }\nonumber\\
\nabla\cdot v=0\text{, }\label{eq:New1EqnModel}\\
k_{t}+v\cdot\nabla k-\nabla\cdot\left(  \left[  \nu+\sqrt{2}\mu k\tau\right]
\nabla k\right)  +\frac{\sqrt{2}}{2}\tau^{-1}k=\sqrt{2}\mu k\tau|\nabla
^{s}v|^{2}\text{.}\nonumber
\end{gather}

Let $L,U$ denote large length and velocity scales, defined precisely in
Section 2, equation (\ref{eq:FLUdefinition}), $\mathcal{R}e=LU/\nu$ the usual
Reynolds number and let $T^{\ast}=L/U$ denote the large scale turnover time.
The main result herein is that with the kinematic length scale selection
(\ref{eq:DynamicMixingLength}) conditions 1-4 are now satisfied.

\begin{theorem}
Let $\mu,\tau$\ be positive and $\Omega$ a bounded regular domain. Let
\[
l(x,t)=\sqrt{2}k(x,t)^{1/2}\tau.
\]
Then, condition 1 holds.

Suppose the boundary conditions are no-slip ($v=0,k=0$ on $\partial\Omega$).
Then, Condition 2 is satisfied. At walls
\[
l(x)\rightarrow0\mathit{\ }\text{\textit{as}}\mathit{\ }x\rightarrow walls.
\]

Suppose the model's energy inequality, equation (\ref{eq:EnergyINeq}) below,
holds. If the boundary conditions are either no slip or periodic with zero
mean for $v$ and periodic for $k$, (\ref{eq:PeriodicZeroMean}) below,
\textit{Condition 3 also holds}:%
\[
\int_{\Omega}\frac{1}{2}|v(x,t)|^{2}+k(x,t)dx\leq Const.<\infty\text{
uniformly in time.}%
\]
The model's energy dissipation rate is%
\[
\varepsilon_{\text{model}}(t)=\frac{1}{|\Omega|}\int_{\Omega}2\nu|\nabla
^{s}v(x,t)|^{2}+\frac{\sqrt{2}}{2}\tau^{-1}k(x,t)dx.
\]
\textit{Time averages of the model's energy dissipation rate are finite}:%
\[
\lim\sup_{T\rightarrow\infty}\frac{1}{T}\int_{0}^{T}\varepsilon_{\text{model}%
}(t)dt<\infty.
\]

Suppose the boundary conditions are either periodic with zero mean for $v$ and
periodic for $k$, (\ref{eq:PeriodicZeroMean}) below, or no-slip ($v=0,k=0$ on
the boundary) and the body force satisfies $f(x)=0$ on the boundary. If the
selected time averaging window satisfies%
\[
\frac{\tau}{T^{\ast}}\leq\frac{1}{\sqrt{\mu}}\text{ \ }\left(  \simeq
1.35\text{ for }\mu=0.55\right)
\]
then \textit{Condition 4 holds} uniformly in the Reynolds number
\[
\lim\sup_{T\rightarrow\infty}\frac{1}{T}\int_{0}^{T}\varepsilon_{\text{model}%
}(t)dt\leq4\left(  1+\mathcal{R}e^{-1}\right)  \frac{U^{3}}{L}\text{.}%
\]

\end{theorem}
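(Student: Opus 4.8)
The plan is to dispatch the four conditions in increasing order of difficulty: Conditions 1 and 2 are immediate from the kinematic formula, while Conditions 3 and 4 rest on a single algebraic cancellation in the energy balance followed by a Doering--Foias--style test against the body force. For Condition 1, I would simply read off the eddy viscosity $\nu_T=\mu l\sqrt{k}=\sqrt{2}\mu k\tau$: it is manifestly increasing in $\tau$, and as $\tau\downarrow 0$ the coefficient $\sqrt2\mu k\tau\to 0$; moreover the reaction term $\tfrac{\sqrt2}{2}\tau^{-1}k$ in the $k$-equation penalizes $k$ by $\tau^{-1}$, forcing $k\to 0$ and hence $\nu_T\to 0$, so that (\ref{eq:New1EqnModel}) collapses to the NSE. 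Condition 2 is shorter still: under no-slip $k=0$ on $\partial\Omega$, so $l(x,t)=\sqrt2 k^{1/2}\tau\to 0$ at the walls by continuity.

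For Condition 3 I would first reproduce the global energy balance for (\ref{eq:New1EqnModel}). Testing the momentum equation with $v$ and adding the $k$-equation integrated over $\Omega$, the production source $\sqrt2\mu k\tau|\nabla^s v|^2$ in the $k$-equation cancels exactly against the eddy-viscosity work in the $v$-equation, leaving
\[
\frac{d}{dt}\int_\Omega \tfrac12|v|^2 + k\,dx + \int_\Omega 2\nu|\nabla^s v|^2 + \tfrac{\sqrt2}{2}\tau^{-1}k\,dx = \int_\Omega f\cdot v\,dx.
\]
The decisive point, and the reason the kinematic length scale repairs the standard model, is that the dissipation now contains $k$ to the \emph{first} power, matching the energy and removing the power mismatch. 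Writing $E(t)=\int_\Omega \tfrac12|v|^2+k\,dx$, I would bound the velocity dissipation below by Korn's and Poincar\'e's inequalities, $\int_\Omega 2\nu|\nabla^s v|^2\,dx \ge c\nu\|v\|^2$, so that the left side dominates $\gamma E$ with $\gamma=\min\{c\nu,\ \tfrac{\sqrt2}{2}\tau^{-1}\}$; a Young estimate on $\int_\Omega f\cdot v\,dx$ then gives $\tfrac{d}{dt}E+\tfrac{\gamma}{2}E\le C\|f\|^2$, and Gr\"onwall yields the uniform-in-time bound. For weak solutions one invokes the assumed inequality (\ref{eq:EnergyINeq}) in place of the identity.

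The finite-time-average half of Condition 4 follows by integrating the balance on $[0,T]$, dividing by $T$, and using $E\ge 0$ with the uniform bound on $E$ to kill the $E(T)-E(0)$ contribution. The substantive claim is the quantitative bound. Writing $\langle\,\cdot\,\rangle=\limsup_{T\to\infty}\tfrac1T\int_0^T\,\cdot\,dt$, I would first time-average the balance to get $|\Omega|\langle\varepsilon_{\text{model}}\rangle=\langle(f,v)\rangle\le\|f\|\langle\|v\|^2\rangle^{1/2}$, i.e. $\langle\varepsilon_{\text{model}}\rangle\le FU$. To upgrade this quadratic bound to the cubic $U^3/L$, I would test the momentum equation with $f$ and time-average: $\langle(v_t,f)\rangle$ vanishes because $E$ is bounded, the pressure drops out since $\nabla\cdot f=0$, and all boundary terms vanish under periodicity or the hypothesis $f=0$ on $\partial\Omega$. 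Integration by parts then expresses $\|f\|^2$ through the convective contribution $\langle\int_\Omega(v\cdot\nabla f)\cdot v\rangle$, a viscous term $\nu\langle\int_\Omega v\cdot\Delta f\rangle$, and the eddy-viscosity term $\sqrt2\mu\tau\langle\int_\Omega k\,\nabla^s v:\nabla^s f\rangle$.

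That last, eddy-viscosity term is where I expect the real work, and where the hypothesis $\tau/T^\ast\le 1/\sqrt\mu$ is consumed. I would bound it by Cauchy--Schwarz in space and time by $\sqrt2\mu\tau\|\nabla^s f\|_\infty\langle\int_\Omega k\rangle^{1/2}\langle\int_\Omega k|\nabla^s v|^2\rangle^{1/2}$, and then replace $\langle\int_\Omega k|\nabla^s v|^2\rangle$ using the time-averaged $k$-equation, whose production equals its dissipation, $\sqrt2\mu\tau\langle\int_\Omega k|\nabla^s v|^2\rangle=\tfrac{\sqrt2}{2}\tau^{-1}\langle\int_\Omega k\rangle$. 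This collapses the eddy term to a multiple of $\langle\int_\Omega k\rangle$, which I bound by $\langle\varepsilon_{\text{model}}\rangle$ and hence by $FU$. The net effect is a contribution proportional to $\sqrt\mu\,(\tau/T^\ast)\,F^2|\Omega|$ on the right of the identity $\|f\|^2=F^2|\Omega|$; the condition $\tau/T^\ast\le 1/\sqrt\mu$ is precisely what keeps this coefficient controlled so the term can be absorbed on the left. Solving the resulting inequality for $F$ gives $F\le C(1+\mathcal{R}e^{-1})U^2/L$ once the norms of $f$ are expressed through the length scale $L$ of (\ref{eq:FLUdefinition}), and combining with $\langle\varepsilon_{\text{model}}\rangle\le FU$ produces the stated $4(1+\mathcal{R}e^{-1})U^3/L$, the constant accumulating from the Cauchy--Schwarz and Young steps. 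The main obstacle throughout is the bookkeeping of the eddy term's $\tau$-dependence, so that the $k$-equation production--dissipation balance converts it into an absorbable multiple of $\langle\int_\Omega k\rangle$ under exactly the stated time-window constraint.
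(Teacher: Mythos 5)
Your proposal is correct and follows essentially the same route as the paper: Conditions 1--2 read off from the kinematic formula, Condition 3 via the exact production--dissipation cancellation plus Korn/Poincar\'e--Friedrichs and a Gr\"onwall argument on the now power-matched energy inequality, and Condition 4 via the Doering--Foias test against $f$ with the time-averaged $k$-equation identity (the paper's Lemma 1) converting the eddy-viscosity term into an absorbable multiple of $\int_\Omega k\,dx$ under the restriction on $\tau/T^{\ast}$. The only differences are cosmetic (you solve for $F$ and then multiply by $U$, while the paper substitutes the bound for $F\langle\|v\|^2/|\Omega|\rangle^{1/2}$ directly), so no further comment is needed.
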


\begin{proof}
The proof that Condition 4 holds will be presented in Section 3. The reminder
is proven as follows. Condition 1 is obvious. Since $l(x,t)=\sqrt
{2}k(x,t)^{1/2}\tau$\ and $k(x,t)$ vanishes at walls it follows that so does
$l(x,t)$ so Condition 2 holds.

In the energy inequality (\ref{eq:EnergyINeq}), $l(x,t)=\sqrt{2}%
k(x,t)^{1/2}\tau$ yields
\begin{gather}
\frac{d}{dt}\int_{\Omega}\frac{1}{2}|v(x,t)|^{2}+k(x,t)dx+\int_{\Omega}%
2\nu|\nabla^{s}v(x,t)|^{2}+\frac{\sqrt{2}}{2}\tau^{-1}k(x,t)dx\nonumber\\
\leq\int_{\Omega}f(x)\cdot v(x,t)dx\text{.} \label{eq:EnergyBalance2}%
\end{gather}
By Korn's inequality and the Poincar\'{e}-Friedrichs inequality
\begin{align*}
\alpha\int_{\Omega}\frac{1}{2}|v(x,t)|^{2}+k(x,t)dx  &  \leq\int_{\Omega}%
2\nu|\nabla^{s}v(x,t)|^{2}+\frac{\sqrt{2}}{2}\tau^{-1}k(x,t)dx,\\
\text{where }\alpha &  =\alpha(C_{PF},\nu,\tau)>0.
\end{align*}
Let $y(t)=\int\frac{1}{2}|v(x,t)|^{2}+k(x,t)dx$. Thus, $y(t)$ satisfies%
\[
y^{\prime}(t)+\alpha y(t)\leq\int_{\Omega}f(x)\cdot v(x,t)dx\leq\frac{\alpha
}{2}y(t)+C(\alpha)\int_{\Omega}|f|^{2}dx.
\]
An integrating factor then implies%
\[
y(t)\leq e^{-\frac{\alpha}{2}t}y(0)+\left(  C(\alpha)\int_{\Omega}%
|f|^{2}dx\right)  \int_{0}^{t}e^{-\frac{\alpha}{2}(t-s)}ds
\]
which is uniformly bounded in time, verifying Condition 3.

For the last claim, time average the energy balance (\ref{eq:EnergyBalance2}).
The result can be compressed to read%
\[
\frac{y(T)-y(0)}{T}+\frac{1}{T}\int_{0}^{T}\varepsilon_{\text{model}%
}(t)dt=\frac{1}{T}\int_{0}^{T}\left(  \int_{\Omega}f(x)\cdot v(x,t)dx\right)
dt
\]
The first term on the left hand side is $\mathcal{O}(\frac{1}{T})$ since
$y(t)$ is uniformly bounded. The RHS is also uniformly in $T$ bounded (again
since $y(t)$ is uniformly bounded). Thus so is $\frac{1}{T}\int_{0}%
^{T}\varepsilon_{\text{model}}(t)dt$.
\end{proof}

The estimate $\varepsilon\simeq$\ $U^{3}/L$ in Theorem 1 is consistent as
$Re\rightarrow\infty$ with both phenomenology, \cite{Pope}, and the rate
proven for the Navier-Stokes equations in \cite{Wang97}, \cite{DF02},
\cite{DC92}. Building on this work, the proof in Section consists of
estimating 4 key terms. The first 3 are a close parallel to the NSE analysis
in these papers and the fourth is model specific.

The main contribution herein is then recognition that several flaws of the
model (\ref{eq:1EqnModel}) originate in the turbulence length-scale
specification. These are corrected by the kinematic choice
(\ref{eq:DynamicMixingLength}) rather than by calibrating $l$ with increased
complexity. The second main contribution is the proof in Section 3 that the
kinematic choice does not over dissipate, i.e., Condition 4 holds.

\textbf{Model existence is an open problem}. The proof of Theorem 1 requires
assuming weak solutions of the model exist and satisfy an energy inequality
(i.e., (\ref{eq:StandardMoelEnergyBalance}) with $=$\ replaced by $\leq$),
$k(x,t)\geq0$ and that in the model's weak formulation the test function may
be chosen to be the (smooth) body force $f(x)$. Such a theory for the standard
model (with static $l=l(x)$) has been developed over 20+ years of difficult
progress from intense effort including \cite{L97}, with positivity of $k$
established in \cite{LM93}, see also \cite{WF02}, existence of suitable weak
solutions in \cite{BLM11}, culminating in Chapter 8 of \cite{CL} and
\cite{BM18} including an energy inequality (with equality an open problem) and
uniqueness under restrictive conditions. Conditions 3 and 4 are open problems
for the standard model. Based on this work we conjecture that an existence
theory, while not the topic of this report, may be possible for the (related)
$1-$equation model with kinematic length scale (\ref{eq:New1EqnModel}).

\section{Preliminaries and notation}

This section will develop Condition 4, that after time averaging
$\varepsilon_{\text{model}}\simeq$\ $U^{3}/L$, and present notation and
preliminaries needed for the proof in Section 3. We impose periodic boundary
conditions on $k(x,t)$ and periodic with zero mean boundary conditions on
$v,p,v_{0},f$. Periodicity and zero mean denote respectively%
\begin{equation}
\text{\textit{Periodic}: }\phi(x+L_{\Omega}e_{j},t)=\phi(x,t)\text{ and
\textit{Zero mean}:}\int_{\Omega}\phi dx=0\,. \label{eq:PeriodicZeroMean}%
\end{equation}
The proof when the boundary conditions are no-slip, $v=0,k=0$ on
$\partial\Omega$, and $f(x)=0$ on $\partial\Omega$\ will be omitted. It\ is
exactly the same as in the periodic case.

\textbf{Notation used in the proof.\ }The long time average of a function
$\phi(t)$\ is%
\begin{gather*}
\left\langle \phi\right\rangle =\lim\sup_{T\rightarrow\infty}\frac{1}{T}%
\int_{0}^{T}\phi(t)dt\text{ }\text{and satisfies}\\
\left\langle \phi\psi\right\rangle \leq\left\langle |\phi|^{2}\right\rangle
^{1/2}\left\langle |\psi|^{2}\right\rangle ^{1/2}\text{ and }\left\langle
\left\langle \phi\right\rangle \right\rangle =\left\langle \phi\right\rangle .
\end{gather*}
The usual $L^{2}(\Omega)$ norm, inner product and $L^{p}(\Omega)$ norm are
$||\cdot||,(\cdot,\cdot),||\cdot||_{p}$.

\textbf{Preliminaries.} Define the global velocity scale\footnote{It will
simplify the proofs not to scale also by the number of components. This can
easily be done in the final result.} $U$, the body force scale $F$ and large
length scale $L$ by%
\begin{equation}
\left.
\begin{array}
[c]{c}%
F=\left(  \frac{1}{|\Omega|}\int_{\Omega}|f(x)|^{2}dx\right)  ^{1/2}\text{,
}\\
L=\min\left[  L_{\Omega},\frac{F}{\sup_{x\in\Omega}|\nabla^{s}f(x)|},\frac
{F}{\left(  \frac{1}{|\Omega|}\int_{\Omega}|\nabla^{s}f(x)|^{2}dx\right)
^{1/2}}\right] \\
U=\left(  \lim\sup_{T\rightarrow\infty}\frac{1}{T}\int_{0}^{T}\frac{1}%
{|\Omega|}\int_{\Omega}|v(x,t)|^{2}dxdt\right)  ^{1/2}.
\end{array}
\right\}  \label{eq:FLUdefinition}%
\end{equation}
$L$ has units of length and satisfies%
\begin{equation}
||\nabla^{s}f||_{\infty}\leq\frac{F}{L}\text{ and }\frac{1}{|\Omega|}%
||\nabla^{s}f||^{2}\leq\frac{F^{2}}{L^{2}}\text{ }. \label{eq:PropertiesL}%
\end{equation}
We assume that weak solutions of the system satisfy the following energy
inequality.
\begin{equation}
\frac{d}{dt}\left(  \frac{1}{2}||v||^{2}+\int_{\Omega}kdx\right)
+2\nu||\nabla^{s}v||^{2}+\frac{\sqrt{2}}{2\tau}\int_{\Omega}kdx\leq(f,v).
\label{eq:EnergyINeq}%
\end{equation}
This is unproven for the new model but consistent with what is known for the
standard model, e.g., \cite{CL}. We assume the following energy equality for
the separate $k-$equation.
\begin{equation}
\frac{d}{dt}\int_{\Omega}kdx+\frac{\sqrt{2}}{2\tau}\int_{\Omega}%
kdx=\int_{\Omega}\sqrt{2}\mu k\tau|\nabla^{s}v|^{2}dx. \label{Eq:kEqnEnergyEQ}%
\end{equation}
This follows from the definition of a distributional solution by taking the
test function to be $\phi(x)\equiv1$.

\section{Proof that Condition 4 holds}

This section presents a proof that Condition 4 holds for the model
(\ref{eq:New1EqnModel}). The first steps of the proof parallel the\ estimates
in the NSE case in, e.g., \cite{DC92}, \cite{DF02}. With the above compressed
notation, the assumed model energy inequality, motivated by
(\ref{eq:EnergyINeq}), can be written%
\[
\frac{d}{dt}\left(  \frac{1}{2|\Omega|}||v||^{2}+\frac{1}{|\Omega|}%
\int_{\Omega}kdx\right)  +\frac{1}{|\Omega|}\int_{\Omega}2\nu|\nabla^{s}%
v|^{2}+\frac{\sqrt{2}}{2\tau}kdx\leq\frac{1}{|\Omega|}(f,v(t)).
\]
In the introduction the following uniform in $T$ bounds were proven%
\begin{equation}
\left.
\begin{array}
[c]{c}%
\frac{1}{2}||v(T)||^{2}+\int_{\Omega}k(T)dx\leq C<\infty\text{ ,}\\
\frac{1}{T}\int_{0}^{T}\int_{\Omega}\left(  2\nu|\nabla^{s}v|^{2}+\frac
{\sqrt{2}}{2\tau}k\right)  dxdt\leq C<\infty.
\end{array}
\right\}  \label{eq:aPrioriBounds}%
\end{equation}
Time averaging over $0<t<T$ gives%
\begin{gather*}
\frac{1}{T}\left(  \frac{1}{2}||v(T)||^{2}+\int_{\Omega}k(x,T)dx-\frac{1}%
{2}||v(0)||^{2}-\int_{\Omega}k(x,0)dx\right)  +\\
+\frac{1}{T}\int_{0}^{T}\int_{\Omega}\left(  2\nu|\nabla^{s}v|^{2}+\frac
{\sqrt{2}}{2\tau}k\right)  dxdt=\frac{1}{T}\int_{0}^{T}(f,v(t))dt.
\end{gather*}
In view of the \'{a} priori bounds (\ref{eq:aPrioriBounds}) and the
Cauchy-Schwarz inequality, this implies%
\begin{equation}
\mathcal{O}\left(  \frac{1}{T}\right)  +\frac{1}{T}\int_{0}^{T}\varepsilon
_{\text{model}}(t)dt\leq F\left(  \frac{1}{T}\int_{0}^{T}\frac{1}{|\Omega
|}||v||^{2}dt\right)  ^{\frac{1}{2}}. \label{eq:FirstStep}%
\end{equation}

To bound $F$ in terms of flow quantities, take the $L^{2}(\Omega)$ inner
product of (\ref{eq:New1EqnModel}) with $f(x)$, integrate by parts (i.e.,
select the test function to be $f(x)$ in the variational formulation) and
average over $[0,T]$. This gives%
\begin{gather}
F^{2}=\frac{1}{T}\frac{1}{|\Omega|}(v(T)-v_{0},f)-\frac{1}{T}\int_{0}^{T}%
\frac{1}{|\Omega|}(vv,\nabla^{s}f)dt+\\
+\frac{1}{T}\int_{0}^{T}\frac{1}{|\Omega|}\int_{\Omega}2\nu\nabla^{s}%
v:\nabla^{s}f+\sqrt{2}\mu k\tau\nabla^{s}v:\nabla^{s}fdxdt.\nonumber
\end{gather}
The \textbf{first term} on the RHS is\ $\mathcal{O}(1/T)$ as above. The
\textbf{second term} is bounded by the Cauchy-Schwarz inequality and
(\ref{eq:PropertiesL}). For any $0<\beta<1$
\begin{gather*}
\text{\textbf{Second:} }\left\vert \frac{1}{T}\int_{0}^{T}\frac{1}{|\Omega
|}(vv,\nabla^{s}f)dt\right\vert \leq\frac{1}{T}\int_{0}^{T}||\nabla^{s}%
f(\cdot)||_{\infty}\frac{1}{|\Omega|}||vv||^{2}dt\\
\leq||\nabla^{s}f(\cdot)||_{\infty}\frac{1}{T}\int_{0}^{T}\frac{1}{|\Omega
|}||v(\cdot,t)||^{2}dt\leq\frac{F}{L}\frac{1}{T}\int_{0}^{T}\frac{1}{|\Omega
|}||v(\cdot,t)||^{2}dt.
\end{gather*}
The \textbf{third term} is bounded by analogous steps to the second term. For
any $0<\beta<1$
\begin{gather*}
\text{\textbf{Third:} }\frac{1}{T}\int_{0}^{T}\frac{1}{|\Omega|}\int_{\Omega
}2\nu\nabla^{s}v(x,t):\nabla^{s}f(x)dxdt\leq\\
\leq\left(  \frac{1}{T}\int_{0}^{T}\frac{4\nu^{2}}{|\Omega|}||\nabla
^{s}v||^{2}dt\right)  ^{\frac{1}{2}}\left(  \frac{1}{T}\int_{0}^{T}\frac
{1}{|\Omega|}||\nabla^{s}f||^{2}dt\right)  ^{\frac{1}{2}}\\
\leq\left(  \frac{1}{T}\int_{0}^{T}\frac{2\nu}{|\Omega|}||\nabla^{s}%
v||^{2}dt\right)  ^{\frac{1}{2}}\frac{\sqrt{2\nu}F}{L}\leq\frac{\beta F}%
{2U}\frac{1}{T}\int_{0}^{T}\frac{2\nu}{|\Omega|}||\nabla^{s}v||^{2}dt+\frac
{1}{\beta}\frac{\nu UF}{L^{2}}.
\end{gather*}
The \textbf{fourth term} is model specific. Its estimation begins by
successive applications of the space then time Cauchy-Schwarz inequality as
follows%
\begin{gather*}
\text{\textbf{Fourth}: }\left\vert \frac{1}{T}\int_{0}^{T}\frac{1}{|\Omega
|}\int_{\Omega}\sqrt{2}\mu k\tau\nabla^{s}v(x,t):\nabla^{s}f(x)dxdt\right\vert
\leq\\
\leq\frac{1}{T}\int_{0}^{T}\frac{1}{|\Omega|}\int_{\Omega}\left(  \sqrt
{\sqrt{2}\mu k\tau}\right)  \left(  \sqrt{\sqrt{2}\mu k\tau}|\nabla
^{s}v|\right)  |\nabla^{s}f|dxdt\\
\leq||\nabla^{s}f||_{\infty}\frac{1}{T}\int_{0}^{T}\left(  \frac{1}{|\Omega
|}\int_{\Omega}\sqrt{2}\mu k\tau dx\right)  ^{\frac{1}{2}}\left(  \frac
{1}{|\Omega|}\int_{\Omega}\sqrt{2}\mu k\tau|\nabla^{s}v|^{2}dx\right)
^{\frac{1}{2}}dxdt\\
\leq\frac{F}{L}\left(  \frac{U}{FT}\int_{0}^{T}\frac{1}{|\Omega|}\int_{\Omega
}\sqrt{2}\mu k\tau dxdt\right)  ^{\frac{1}{2}}\left(  \frac{F}{UT}\int_{0}%
^{T}\frac{1}{|\Omega|}\int_{\Omega}\sqrt{2}\mu k\tau|\nabla^{s}v|^{2}%
dxdt\right)  ^{\frac{1}{2}}.
\end{gather*}
The arithmetic-geometric mean inequality then implies%
\begin{gather*}
\text{\textbf{Fourth}: }\left\vert \frac{1}{T}\int_{0}^{T}\frac{1}{|\Omega
|}\int_{\Omega}\sqrt{2}\mu k\tau\nabla^{s}v(x,t):\nabla^{s}f(x)dxdt\right\vert
\leq\\
\leq\frac{\beta}{2}\frac{F}{UT}\int_{0}^{T}\frac{1}{|\Omega|}\int_{\Omega
}\sqrt{2}\mu k\tau|\nabla^{s}v|^{2}dxdt+\frac{U}{2\beta F}\frac{F^{2}}{L^{2}%
}\frac{1}{T}\int_{0}^{T}\frac{1}{|\Omega|}\int_{\Omega}\sqrt{2}\mu k\tau
dxdt\\
\leq\frac{\beta}{2}\frac{F}{UT}\int_{0}^{T}\frac{1}{|\Omega|}\int_{\Omega
}\sqrt{2}\mu k\tau|\nabla^{s}v|^{2}dxdt+\frac{1}{2\beta}\frac{UF}{L^{2}T}%
\int_{0}^{T}\frac{1}{|\Omega|}\int_{\Omega}\sqrt{2}\mu k\tau dxdt.
\end{gather*}
Using these four estimates in the bound for $F^{2}$ yields%
\begin{align*}
F^{2} &  \leq\mathcal{O}\left(  \frac{1}{T}\right)  +\frac{F}{L}\frac{1}%
{T}\int_{0}^{T}\frac{1}{|\Omega|}||v||^{2}dt+\frac{1}{2\beta}\frac{UF}{L^{2}%
}\frac{1}{T}\int_{0}^{T}\frac{1}{|\Omega|}\int_{\Omega}\sqrt{2}\mu k\tau
dxdt\\
&  +\frac{1}{\beta}\frac{\nu UF}{L^{2}}+\frac{\beta F}{2U}\frac{1}{T}\int
_{0}^{T}\frac{1}{|\Omega|}\int_{\Omega}\left[  2\nu+\sqrt{2}\mu k\tau\right]
|\nabla^{s}v|^{2}dxdt.
\end{align*}
Thus, we have an estimate for $F\left(  \frac{1}{T}\int_{0}^{T}\frac
{1}{|\Omega|}||v||^{2}dt\right)  ^{\frac{1}{2}}:$%
\begin{gather*}
F\left(  \frac{1}{T}\int_{0}^{T}\frac{1}{|\Omega|}||v||^{2}dt\right)
^{\frac{1}{2}}\leq\mathcal{O}\left(  \frac{1}{T}\right)  +\frac{1}{L}\left(
\frac{1}{T}\int_{0}^{T}\frac{1}{|\Omega|}||v||^{2}dt\right)  ^{\frac{3}{2}}+\\
+\frac{\beta}{2}\frac{\left(  \frac{1}{T}\int_{0}^{T}\frac{1}{|\Omega
|}||v||^{2}dt\right)  ^{\frac{1}{2}}}{U}\frac{1}{T}\int_{0}^{T}\frac
{1}{|\Omega|}\int_{\Omega}\left[  2\nu+\sqrt{2}\mu k\tau\right]  |\nabla
^{s}v|^{2}dxdt+\\
+\frac{1}{2\beta}\left(  \frac{1}{T}\int_{0}^{T}\frac{1}{|\Omega|}%
||v||^{2}dt\right)  ^{\frac{1}{2}}\frac{2\nu U}{L^{2}}+\\
+\frac{1}{2\beta}\left(  \frac{1}{T}\int_{0}^{T}\frac{1}{|\Omega|}%
||v||^{2}dt\right)  ^{\frac{1}{2}}\frac{U}{L^{2}}\frac{1}{T}\int_{0}^{T}%
\frac{1}{|\Omega|}\int_{\Omega}\sqrt{2}\mu k\tau dxdt.
\end{gather*}
Inserting this on the RHS of (\ref{eq:FirstStep}) yields%
\begin{gather}
\frac{1}{T}\int_{0}^{T}\varepsilon_{\text{model}}dt\leq\mathcal{O}\left(
\frac{1}{T}\right)  +\frac{1}{L}\left(  \frac{1}{T}\int_{0}^{T}\frac
{1}{|\Omega|}||v||^{2}dt\right)  ^{\frac{3}{2}}+\label{eq:SecondStep}\\
+\frac{\beta}{2}\frac{\left(  \frac{1}{T}\int_{0}^{T}\frac{1}{|\Omega
|}||v||^{2}dt\right)  ^{\frac{1}{2}}}{U}\frac{1}{T}\int_{0}^{T}\frac
{1}{|\Omega|}\int_{\Omega}\left[  2\nu+\sqrt{2}\mu k\tau\right]  |\nabla
^{s}v|^{2}dxdt+\nonumber\\
+\frac{1}{2\beta}\left(  \frac{1}{T}\int_{0}^{T}\frac{1}{|\Omega|}%
||v||^{2}dt\right)  ^{\frac{1}{2}}U\frac{2\nu}{L^{2}}+\nonumber\\
+\frac{1}{2\beta}\left(  \frac{1}{T}\int_{0}^{T}\frac{1}{|\Omega|}%
||v||^{2}dt\right)  ^{\frac{1}{2}}\frac{U}{L^{2}}\left(  \frac{1}{T}\int
_{0}^{T}\frac{1}{|\Omega|}\int_{\Omega}\sqrt{2}\mu k\tau dxdt\right)
.\nonumber
\end{gather}
We prove in the next lemma an estimate for the last, model specific, term
$\int\sqrt{2}\mu k\tau dx$ on the RHS. This estimate has the interpretation
that, on time average, the decay (relaxation) rate of $k(x,t)$ balances the
transfer rate of kinetic energy from means to fluctuations.

\begin{lemma}
For weak solutions of the $k-$equation we have%
\[
\left\langle \frac{1}{|\Omega|}\int_{\Omega}\sqrt{2}\mu k(x,t)\tau
dx\right\rangle =2\mu\tau^{2}\left\langle \frac{1}{|\Omega|}\int_{\Omega}%
\sqrt{2}\mu k\tau|\nabla^{s}v|^{2}dx\right\rangle .
\]

\end{lemma}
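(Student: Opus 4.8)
The plan is to recognize the claimed identity as nothing more than the long-time average of the separate $k$-equation energy equality (\ref{Eq:kEqnEnergyEQ}), so that no new estimation is needed---only the a priori bounds already secured in the proof of Theorem 1. Abbreviate the total fluctuation energy and the production term by
\[
K(t)=\int_{\Omega}k(x,t)\,dx,\qquad P(t)=\int_{\Omega}\sqrt{2}\mu k\tau|\nabla^{s}v|^{2}\,dx,
\]
so that (\ref{Eq:kEqnEnergyEQ}) reads as the scalar balance $K'(t)+\frac{\sqrt{2}}{2\tau}K(t)=P(t)$. Everything else is averaging this single ODE.

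First I would average the balance over $[0,T]$ and divide by $T$. The derivative term telescopes exactly, giving
\[
\frac{K(T)-K(0)}{T}+\frac{\sqrt{2}}{2\tau}\,\frac{1}{T}\int_{0}^{T}K(t)\,dt=\frac{1}{T}\int_{0}^{T}P(t)\,dt.
\]
Next I would let $T\rightarrow\infty$. The essential input is Condition 3 from Theorem 1: since $\int_{\Omega}k\,dx\leq\frac{1}{2}||v||^{2}+\int_{\Omega}k\,dx$ is bounded uniformly in time (see (\ref{eq:aPrioriBounds})), the quantity $K(t)$ is uniformly bounded, hence the boundary term $\frac{K(T)-K(0)}{T}$ tends to $0$ as a genuine limit and $\left\langle K\right\rangle =\limsup_{T}\frac{1}{T}\int_{0}^{T}K\,dt$ is finite. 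Passing to $\limsup_{T\rightarrow\infty}$ then yields
\[
\left\langle P\right\rangle =\frac{\sqrt{2}}{2\tau}\left\langle K\right\rangle .
\]

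Finally I would convert this into the stated form by algebra: solving $\left\langle K\right\rangle =\sqrt{2}\tau\left\langle P\right\rangle$ and multiplying by $\sqrt{2}\mu\tau/|\Omega|$ gives
\[
\left\langle \frac{1}{|\Omega|}\int_{\Omega}\sqrt{2}\mu k\tau\,dx\right\rangle =\frac{\sqrt{2}\mu\tau}{|\Omega|}\left\langle K\right\rangle =\frac{2\mu\tau^{2}}{|\Omega|}\left\langle P\right\rangle =2\mu\tau^{2}\left\langle \frac{1}{|\Omega|}\int_{\Omega}\sqrt{2}\mu k\tau|\nabla^{s}v|^{2}\,dx\right\rangle ,
\]
which is the assertion. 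The only point requiring care---the closest thing to an obstacle---is the passage to the $\limsup$, since $\limsup$ is not additive in general. I would justify it by observing that the boundary summand converges (to $0$), and a term possessing an honest limit may be pulled through $\limsup$ without altering it; thus the $\limsup$ of the production side is exactly $\left\langle P\right\rangle$ while that of the energy side is $\frac{\sqrt{2}}{2\tau}\left\langle K\right\rangle$, and the identity follows. The interpretation stated after the lemma, that relaxation of $k$ balances the mean-to-fluctuation transfer on time average, is precisely the relation $\frac{\sqrt{2}}{2\tau}\left\langle K\right\rangle =\left\langle P\right\rangle$.
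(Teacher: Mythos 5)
Your proof is correct and follows essentially the same route as the paper's: time-average the integrated $k$-equation energy equality (\ref{Eq:kEqnEnergyEQ}), use the uniform-in-time bound on $\int_{\Omega}k\,dx$ from Condition 3 to discard the $\mathcal{O}(1/T)$ boundary term, and rearrange the constants. Your explicit justification of pulling the convergent boundary term through the $\limsup$ is a detail the paper leaves implicit, but it is not a different argument.
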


\begin{proof}
[of Lemma 1]Integrating the $k-$equation (i.e., choosing $\phi(x)\equiv1$ in
the equation's distributional formulation) yields%
\[
\frac{d}{dt}\frac{1}{|\Omega|}\int_{\Omega}kdx+\frac{\sqrt{2}}{2\tau}\frac
{1}{|\Omega|}\int_{\Omega}kdx=\frac{1}{|\Omega|}\int_{\Omega}\sqrt{2}\mu
k\tau|\nabla^{s}v|^{2}dx.
\]
From Theorem 1, $\int kdx$ (and thus its time averages) is uniformly bounded
in time. Thus, we can time average the above. This gives%
\begin{gather*}
\mathcal{O}\left(  \frac{1}{T}\right)  +\frac{\sqrt{2}}{2\tau}\frac{1}{T}%
\int_{0}^{T}\frac{1}{|\Omega|}\int_{\Omega}kdxdt=\frac{1}{T}\int_{0}^{T}%
\frac{1}{|\Omega|}\int_{\Omega}\sqrt{2}\mu k\tau|\nabla^{s}v|^{2}dxdt,\text{
}\\
\text{and thus}\\
\left\langle \frac{1}{|\Omega|}\int_{\Omega}\sqrt{2}\mu k(x,t)\tau
dx\right\rangle =2\mu\tau^{2}\left\langle \frac{1}{|\Omega|}\int_{\Omega}%
\sqrt{2}\mu k\tau|\nabla^{s}v|^{2}dx\right\rangle ,
\end{gather*}
proving the lemma.
\end{proof}

To continue the proof of Theorem 1, this lemma is now used to replace terms on
the RHS of (\ref{eq:SecondStep}) involving $\sqrt{2}\mu k\tau|\nabla^{s}%
v|^{2}$\ by terms with $\sqrt{2}\mu k(x,t)\tau$. Let $T_{j}\rightarrow\infty$
in (\ref{eq:SecondStep}), recalling the definition of\ $\varepsilon
_{\text{model}}$ and inserting the above relation for the last term yields%
\begin{gather}
\left\langle \frac{1}{|\Omega|}\int_{\Omega}\left[  2\nu|\nabla^{s}%
v(x,t)|^{2}+\frac{\sqrt{2}}{2}\tau^{-1}k(x,t)\right]  dx\right\rangle
\leq\frac{U^{3}}{L}+\\
+\frac{\beta}{2}\left\langle \frac{1}{|\Omega|}\int_{\Omega}2\nu|\nabla
^{s}v|^{2}+\frac{1}{2\mu\tau^{2}}\sqrt{2}\mu k(x,t)\tau dx\right\rangle
+\nonumber\\
+\frac{1}{\beta}U^{2}\frac{\nu}{L^{2}}+\frac{1}{2\beta}\frac{U^{2}}{L^{2}%
}\left\langle \frac{1}{|\Omega|}\int_{\Omega}\sqrt{2}\mu k(x,t)\tau
dx\right\rangle .\nonumber
\end{gather}
Collecting terms gives%
\begin{gather}
\left\langle \frac{1}{|\Omega|}\int_{\Omega}\left[  2\nu|\nabla^{s}%
v(x,t)|^{2}+\frac{\sqrt{2}}{2}\tau^{-1}k(x,t)\right]  dx\right\rangle
\leq\frac{1}{L}U^{3}+\frac{1}{\beta}U^{2}\frac{\nu}{L^{2}}\\
+\frac{\beta}{2}\left\langle \frac{1}{|\Omega|}\int_{\Omega}2\nu|\nabla
^{s}v|^{2}+\left(  \frac{1}{2\mu\tau^{2}}+\frac{1}{2\beta}\frac{U^{2}}{L^{2}%
}\right)  \sqrt{2}\mu k(x,t)\tau dx\right\rangle .\nonumber
\end{gather}
The multiplier of $\sqrt{2}\mu k(x,t)\tau$ simplifies to
\[
\frac{\beta}{2}\left(  \frac{1}{2\mu\tau^{2}}+\frac{1}{2\beta}\frac{U^{2}%
}{L^{2}}\right)  \sqrt{2}\mu\tau=\frac{\sqrt{2}}{2}\tau^{-1}\left[
\frac{\beta}{2}+\frac{1}{2}\mu\frac{U^{2}}{L^{2}}\tau^{2}\right]  .
\]
Thus, rearrange the above inequality to read%
\begin{gather*}
\left\langle \frac{1}{|\Omega|}\int_{\Omega}\left[  \left(  1-\frac{\beta}%
{2}\right)  \nu|\nabla^{s}v|^{2}+\left(  1-\left\{  \frac{\beta}{2}+\frac{\mu
}{2}\frac{U^{2}}{L^{2}}\tau^{2}\right\}  \right)  \frac{\sqrt{2}}{2}\tau
^{-1}k\right]  dx\right\rangle \\
\leq\frac{U^{3}}{L}+\frac{1}{\beta}U^{2}\frac{\nu}{L^{2}}=\left(  1+\frac
{1}{\beta}\mathcal{R}e^{-1}\right)  \frac{U^{3}}{L}.
\end{gather*}
Pick (without optimizing) $\beta=1$. This yields
\begin{align*}
&  \left\langle \frac{1}{|\Omega|}\int_{\Omega}\left[  \nu|\nabla
^{s}v(x,t)|^{2}+\frac{\sqrt{2}}{2}\tau^{-1}k(x,t)\right]  dx\right\rangle \\
&  \leq\frac{2}{\min\{1,1-\mu\frac{U^{2}}{L^{2}}\tau^{2}\}}\left\{
\frac{U^{3}}{L}+\mathcal{R}e^{-1}\frac{U^{3}}{L}\right\}  .
\end{align*}
We clearly desire
\[
1-\mu\frac{U^{2}}{L^{2}}\tau^{2}=1-\mu\left(  \frac{\tau}{T^{\ast}}\right)
^{2}\geq\frac{1}{2}.
\]
This holds if the time cutoff $\tau$ is chosen with respect to the global
turnover time $T^{\ast}=L/U$\ so that%
\[
\frac{\tau}{T^{\ast}}\leq\sqrt{\frac{1}{\mu}}\simeq1.35,\text{ for }\mu=0.55.
\]
Then we have, as claimed,%
\[
\left\langle \frac{1}{|\Omega|}\int_{\Omega}\left[  \nu|\nabla^{s}v|^{2}%
+\frac{\sqrt{2}}{2}\tau^{-1}k\right]  dx\right\rangle \leq4\left(
1+\mathcal{R}e^{-1}\right)  \frac{U^{3}}{L}.
\]

\section{Numerical illustrations in 2d and 3d}

This section shows that the static and kinematic turbulence length scales
produces flows with different statistics. We use the simplest reasonable
choices
\[
l_{0}(x)=\min\{0.41y,0.41\cdot0.2\mathcal{R}e^{-1/2}\}\text{ \ and \ }%
l_{K}(x,t)=\sqrt{2}k(x,t)^{1/2}\tau.
\]
All numerical experiments were performed using the package FEniCS. We consider
several normalized, space-averaged statistics. Recall that the
\textit{turbulence intensity} is $I=\left\langle ||u^{\prime}||^{2}%
\right\rangle /\left\langle ||\overline{u}||^{2}\right\rangle $. An
approximation to the (time) evolution of this is calculable from the model%
\[
I_{\text{model}}(t):=\frac{\frac{2}{|\Omega|}\int_{\Omega}k(x,t)dx}{\frac
{1}{|\Omega|}\int_{\Omega}|v(x,t)|^{2}dx}.
\]
%

Next we consider the effective viscosity coefficient for the two methods. The
\textit{effective viscosity} is a useful statistic to quantify the aggregate,
space averaged effect of fluctuating eddy viscosity terms. It is%
\[
\nu_{\text{effective}}(t):=\frac{\frac{1}{|\Omega|}\int_{\Omega}\left[
\nu+\mu l\sqrt{k}\right]  |\nabla^{s}v|^{2}dx}{\frac{1}{|\Omega|}\int_{\Omega
}|\nabla^{s}v|^{2}dx}.
\]
We also consider the related statistic of the \textit{viscosity ratio of
turbulent viscosity to molecular viscosity}
\[
VR(t):=\frac{\frac{1}{|\Omega|}\int_{\Omega}\mu l\sqrt{k}|\nabla^{s}v|^{2}%
dx}{\frac{1}{|\Omega|}\int_{\Omega}2\nu|\nabla^{s}v|^{2}dx}.
\]
We also calculate the evolution of the \textit{Taylor microscale} of each
model's solution:%
\[
\lambda_{\text{Taylor}}(t):=\left(  \frac{\int_{\Omega}|\nabla^{s}v|^{2}%
dt}{\int_{\Omega}|v|^{2}dt}\right)  ^{-1/2}.
\]
The time evolution of the \textit{scaled averaged turbulence length scale} and
turbulent viscosity are also of interest:%
\begin{align*}
\frac{avg(l)}{L} &  :=\frac{1}{L}\left(  \frac{1}{|\Omega|}\int_{\Omega
}l(x,t)^{2}dx\right)  ^{1/2}\\
\frac{avg(\nu_{T})}{LU} &  :=\frac{1}{LU}\frac{1}{|\Omega|}\int_{\Omega}\mu
l(x,t)\sqrt{k(x,t)}dx.
\end{align*}

\subsection{Test 1: Flow between 2d offset circles}

For the first test, we consider a two-dimensional rotational flow obstructed
by a circular obstacle with no-slip boundary conditions. Let $\Omega_{1}
\subset\mathbb{R}^{2}$, where
\[
\Omega_{1}=\{(x,y)\in\mathbb{R}^{2}:x^{2}+y^{2}<1\}\setminus\{(x,y)\in
\mathbb{R}^{2}:(x-.5)^{2}+y^{2}\leq.01\}.
\]
The domain $\Omega_{1}$ is discretized via a Delaunay triangulation with a
maximal mesh width of $.01$; a plot is given below. From the plot in Figure 1
of the model's Taylor microscale this mesh fully resolves the model solution.
\begin{figure}[t]
\subfloat[$\Omega$]{\includegraphics[width=\textwidth]{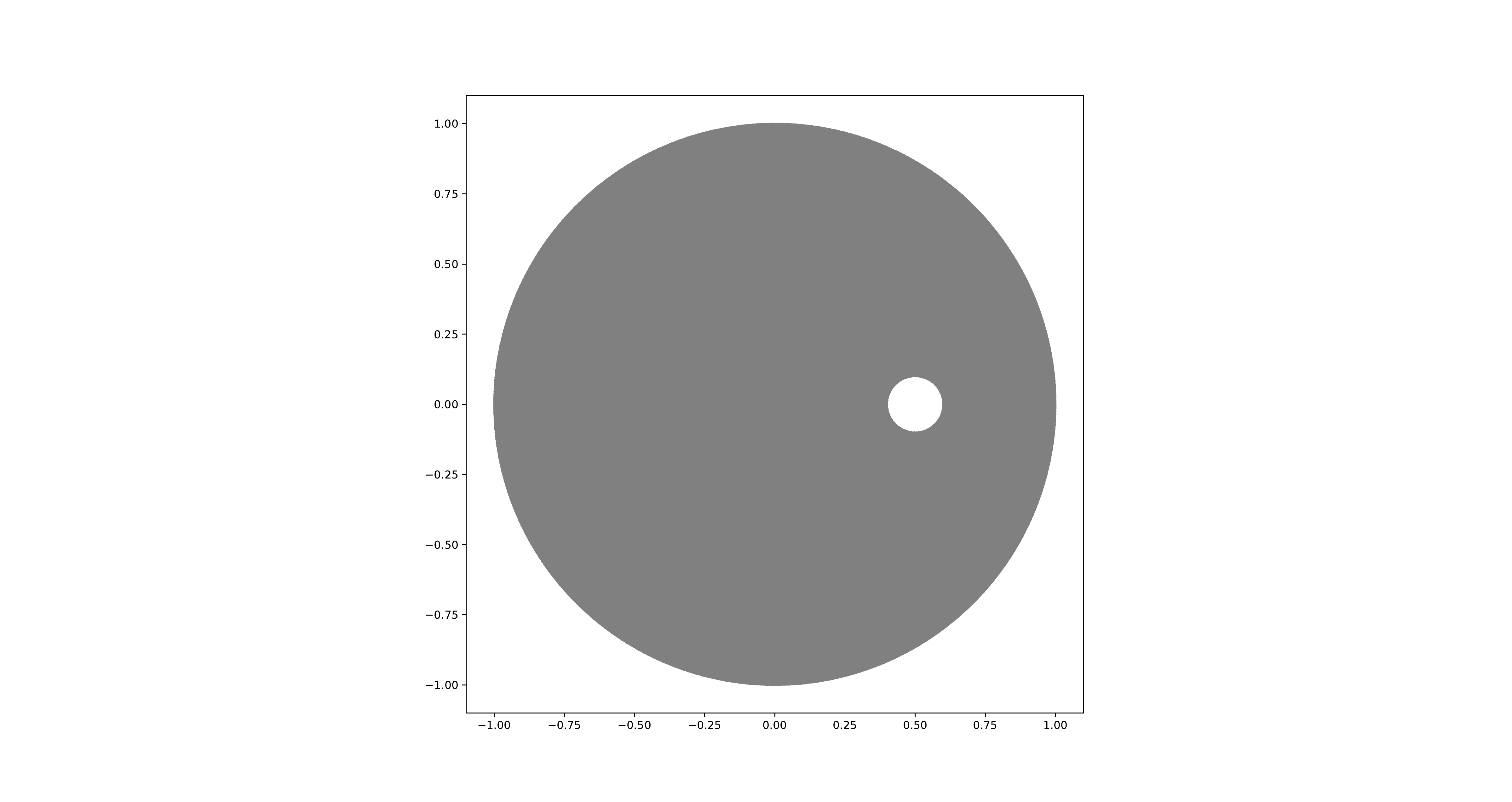}\label{figmesh:a}}\newline
\subfloat[$\Omega$ near the obstacle]{\includegraphics[width=\textwidth]{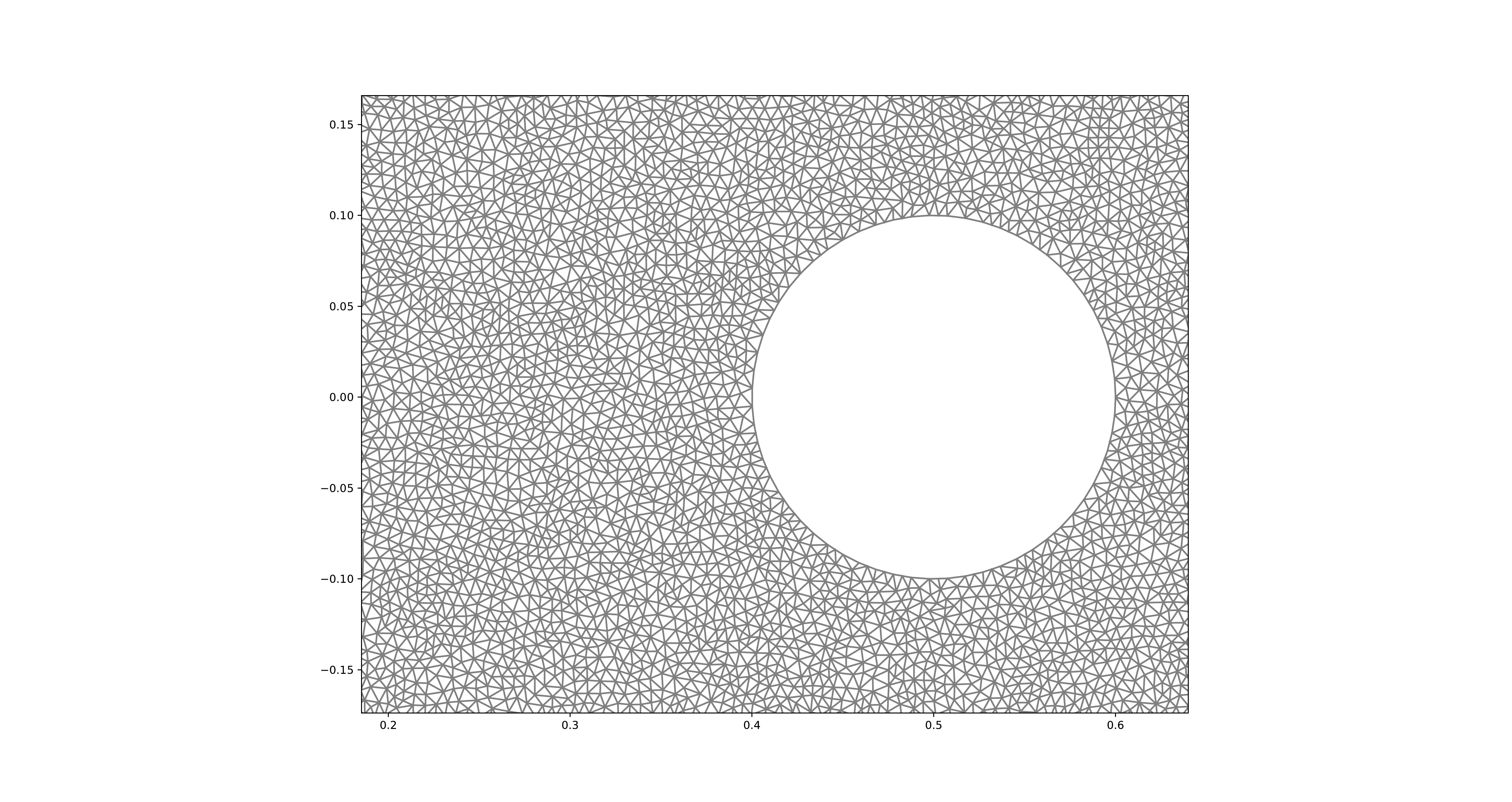}\label{figmesh:b}}
\caption{Discretization of $\Omega$}
\label{fig=mesh}
\end{figure}

We start the test at rest, i.e., $v_{0}=(0,0)^{T}$, and let the fluid have
kinematic viscosity $\nu=0.0001$. We take the final time $T=10$ and averaging
window $\tau=1$. Rather than give an interpretation of the time average for
$0\leq t<1$ we harvest flow statistics for $t\geq1$ after a cold start and
ramping up the body force with a multiplier $\min\{t,1\}$. To generate
counter-clockwise motion we impose the body force
\[
f(x,y;t)=\min\{t,1\}(-4y(1-x^{2}-y^{2}),4x(1-x^{2}-y^{2}))^{T}.
\]
\textbf{Initial Conditions.} An initial condition for the velocity, $v(x,0)$,
and for the TKE $k(x,0)$\ must be specified. For some flows standard choices
are known\footnote{For example, for turbulent flow in a square duct, a choice
is
\begin{gather*}
k(x,0)=1.5|u_{0}(x)|^{2}I^{2}\text{ where}\\
\text{ }I\text{ = turbulent intensity}\simeq0.16\mathcal{R}e^{-1/8}\text{ .}%
\end{gather*}
}. We use a different and systematic approach to the initial condition
$k(x,0)$\ as follows. From $l(x,t)=\sqrt{2}k^{1/2}\tau$ we set at $t=0,$
$l=l_{0}(x)$ and solve for $k(x,0)$. This yields the initial condition%
\[
k(x,0)=\frac{1}{2\tau^{2}}l_{0}^{2}(x)\text{ where }l_{0}(x)=\min
\{0.41y,0.082\mathcal{R}e^{-1/2}\}.
\]
This choice means that $l_{0}(x)=l_{K}(x,0)$.

To compare the models, we plot the temporal evolution of the above statistics.
For both models, we let $\mu=0.55$ and timestep $\Delta{t}=.01$. To let the
flow develop, we first activate both models when $t=1$. \begin{figure}[t]
\subfloat[Model intensity
$I_{\text{model}}$]{\includegraphics[width=.49\textwidth]{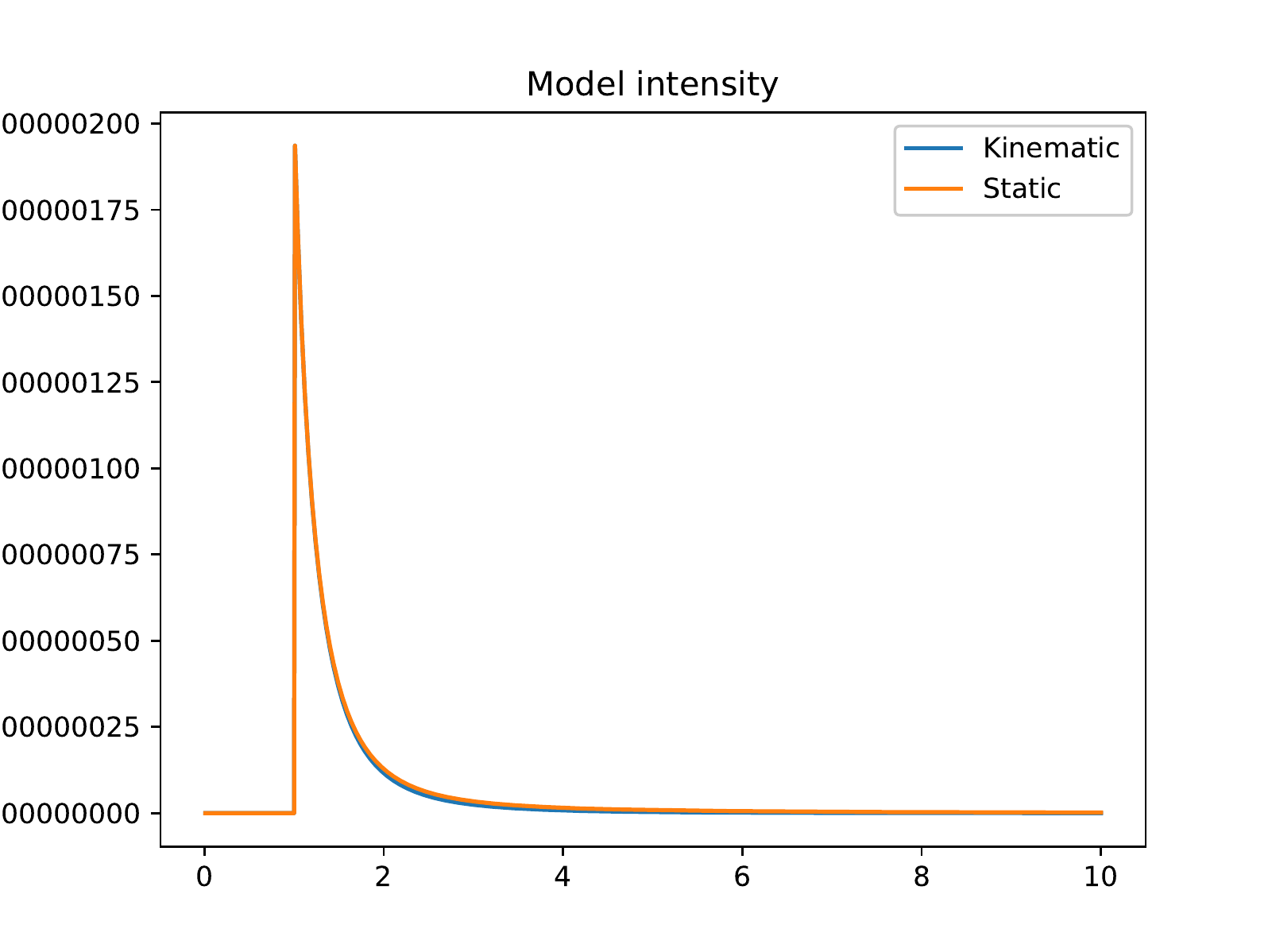}\label{fig2:a}}
\subfloat[Effective viscosity
$\nu_{\text{effective}}$]{\includegraphics[width=.49\textwidth]{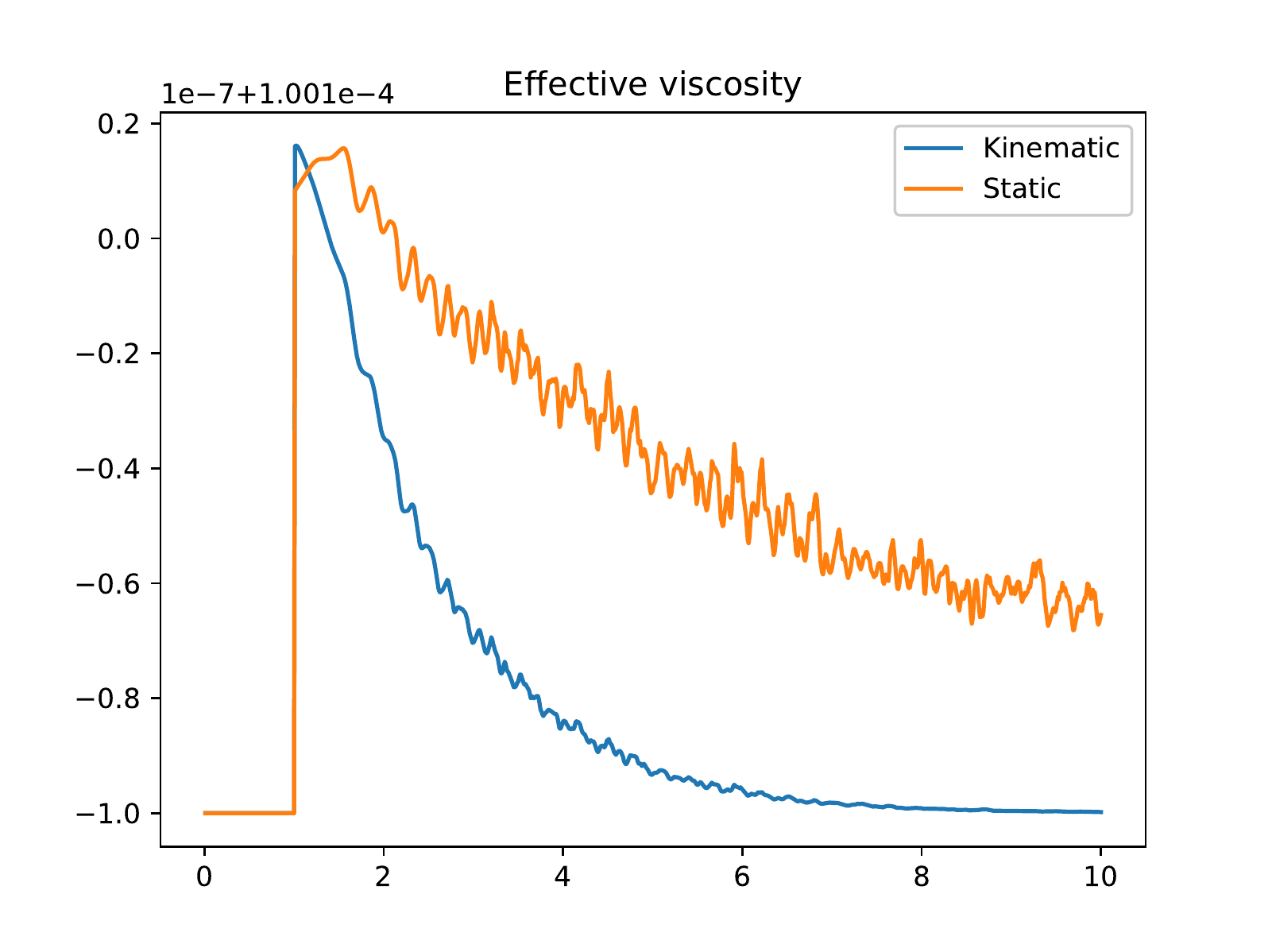}\label{fig2:b}}\newline
\subfloat[Viscosity ratio
$VR_1$]{\includegraphics[width=.49\textwidth]{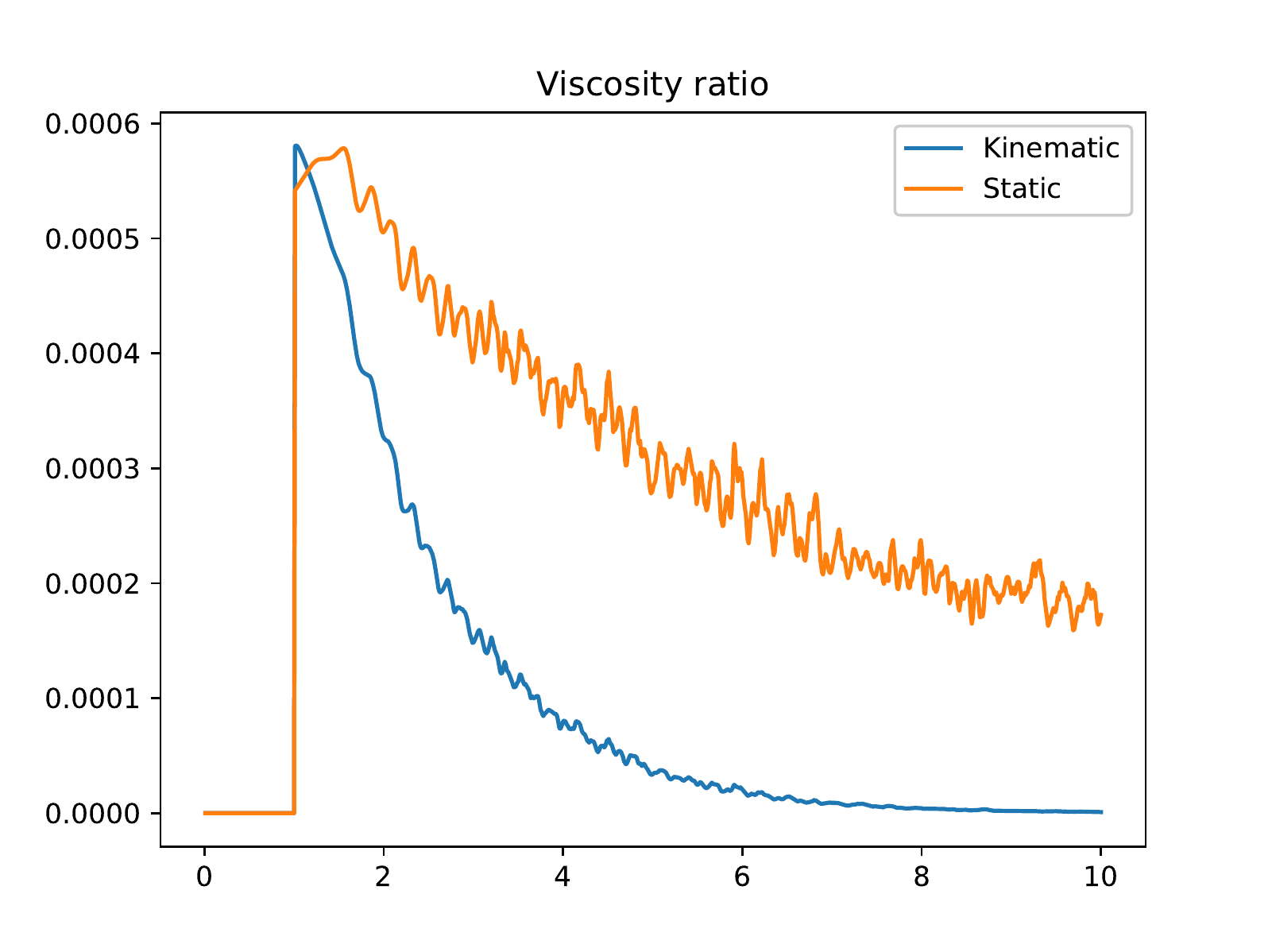}\label{fig2:c}}
\subfloat[Taylor microscale
$\lambda_{\text{Taylor}}$]{\includegraphics[width=.49\textwidth]{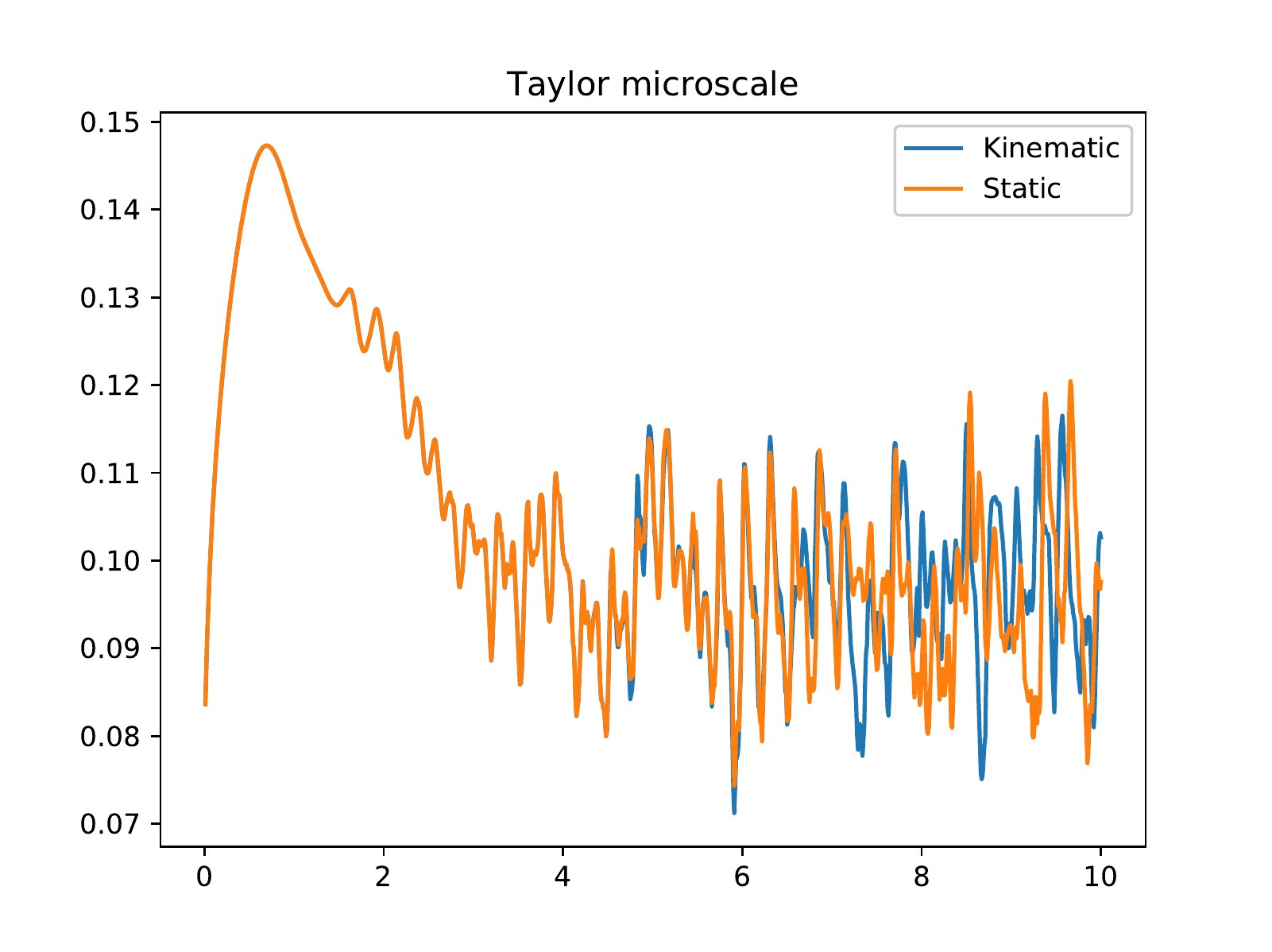}\label{fig2:d}}\newline
\subfloat[$avg{(l)}/L$]{\includegraphics[width=.49\textwidth]{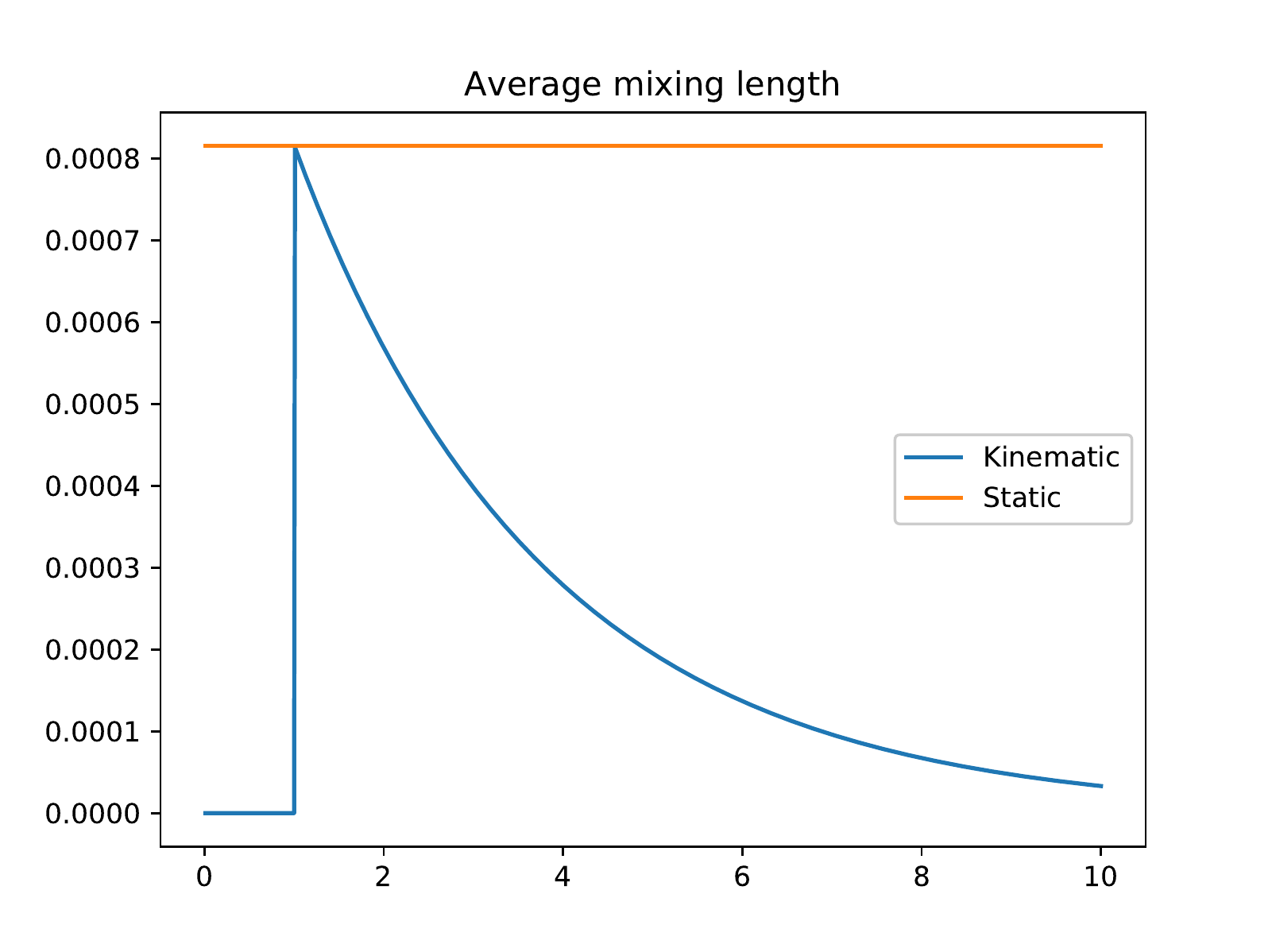}\label{fig2:e}}
\subfloat[$avg{(\nu_T)}/UL$]{\includegraphics[width=.49\textwidth]{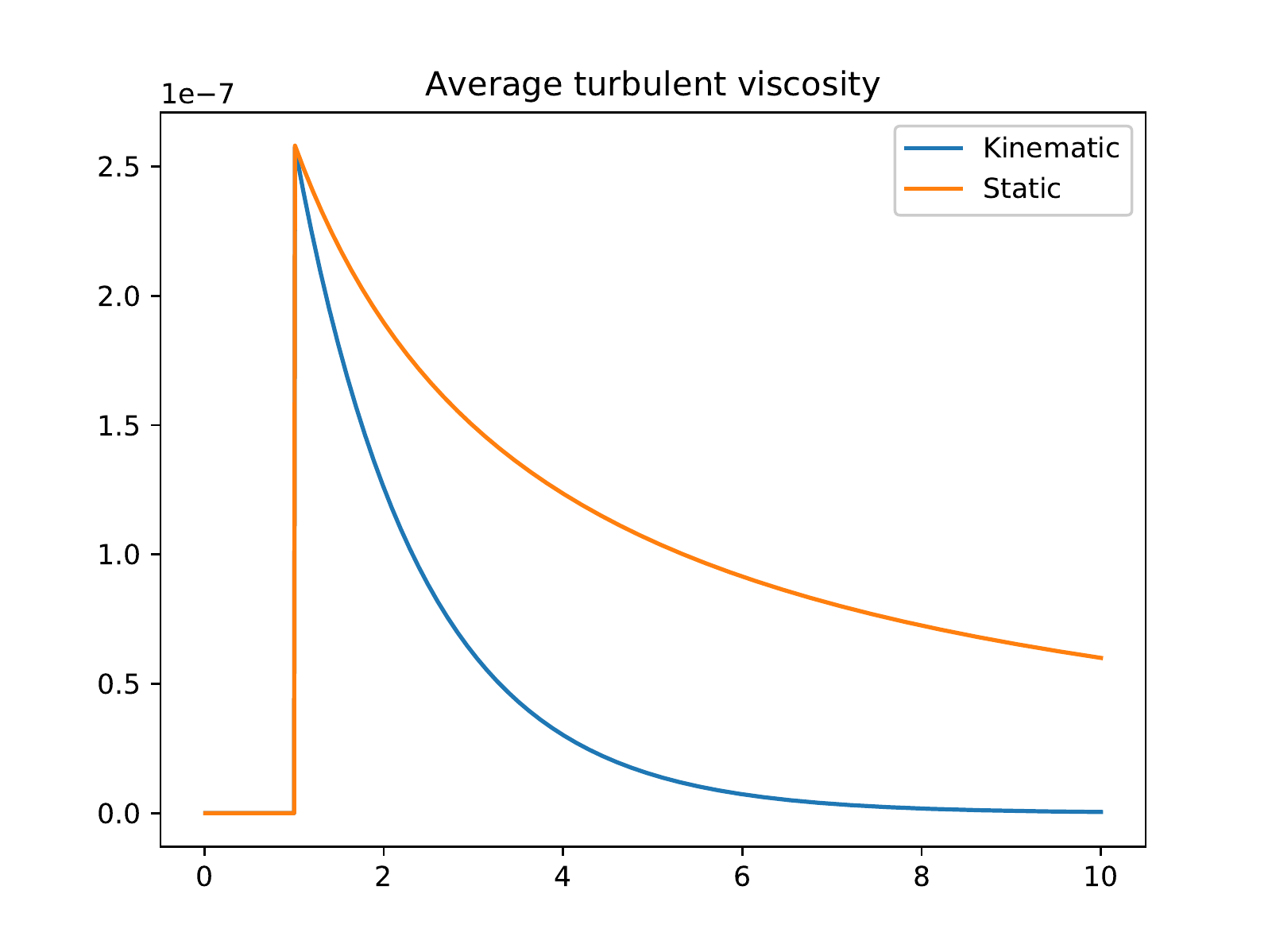}\label{fig2:f}}
\caption{2d Flow statistics for both models.}
\label{fig=stats_1}%
\end{figure}In the test, the model's estimate of the turbulent intensity for
both is similar, as shown in Figure \ref{fig2:a}. In \cite{JL14b} the
turbulent intensity was estimated by an ensemble simulation. For ensemble
averaging $I$ was significant larger than calculated here by time averaging
and with the $1-$equation model. Either intensities by time and ensemble
averaging do not coincide or $I_{model}$ is not an accurate turbulent
intensity. Figure \ref{fig2:b} shows that the effective viscosity for the
kinematic length scale is significantly smaller than for the standard model.
This is consistent with Figure \ref{fig2:c}, \ref{fig2:e} and \ref{fig2:f}. In
Figure \ref{fig2:d} the Taylor microscale is larger than expected, possibly
due to numerical dissipation in the fully implicit time discretization used.

The statistics considered reveal differences in the two models. Figure
\ref{fig2:b} shows that the kinematic model has an effective viscosity that
decays to $\nu_{\text{effective}}=0.0001$ more rapidly than does the static
model. More evidence of this fact is given in Figure \ref{fig2:c}, which shows
the turbulent-to-molecular viscosity ratio. The comparison of the evolution of
the Taylor microscale, given in Figure \ref{fig2:d}, shows similar profiles
until $t\approx5$. Figure \ref{fig2:e}, which compares the evolution of the
average mixing length, shows that the kinematic mixing length model decreases
the turbulence length scale over the course of the simulation. Finally, Figure
\ref{fig2:f} shows that the average turbulent viscosity for the kinematic
model is consistently smaller than that of the static model. Statistical
comparisons of both of these models with different parameters (in particular,
the turbulent time scale $\tau$) are also of interest. Below, we give semilog
(in the vertical axis) plots of the average mixing length with different
values of $\tau$. \begin{figure}[t]
\subfloat[$avg{(l)}/L$ for $\tau=.01,.1,1$.]{\includegraphics[width=.49\textwidth]{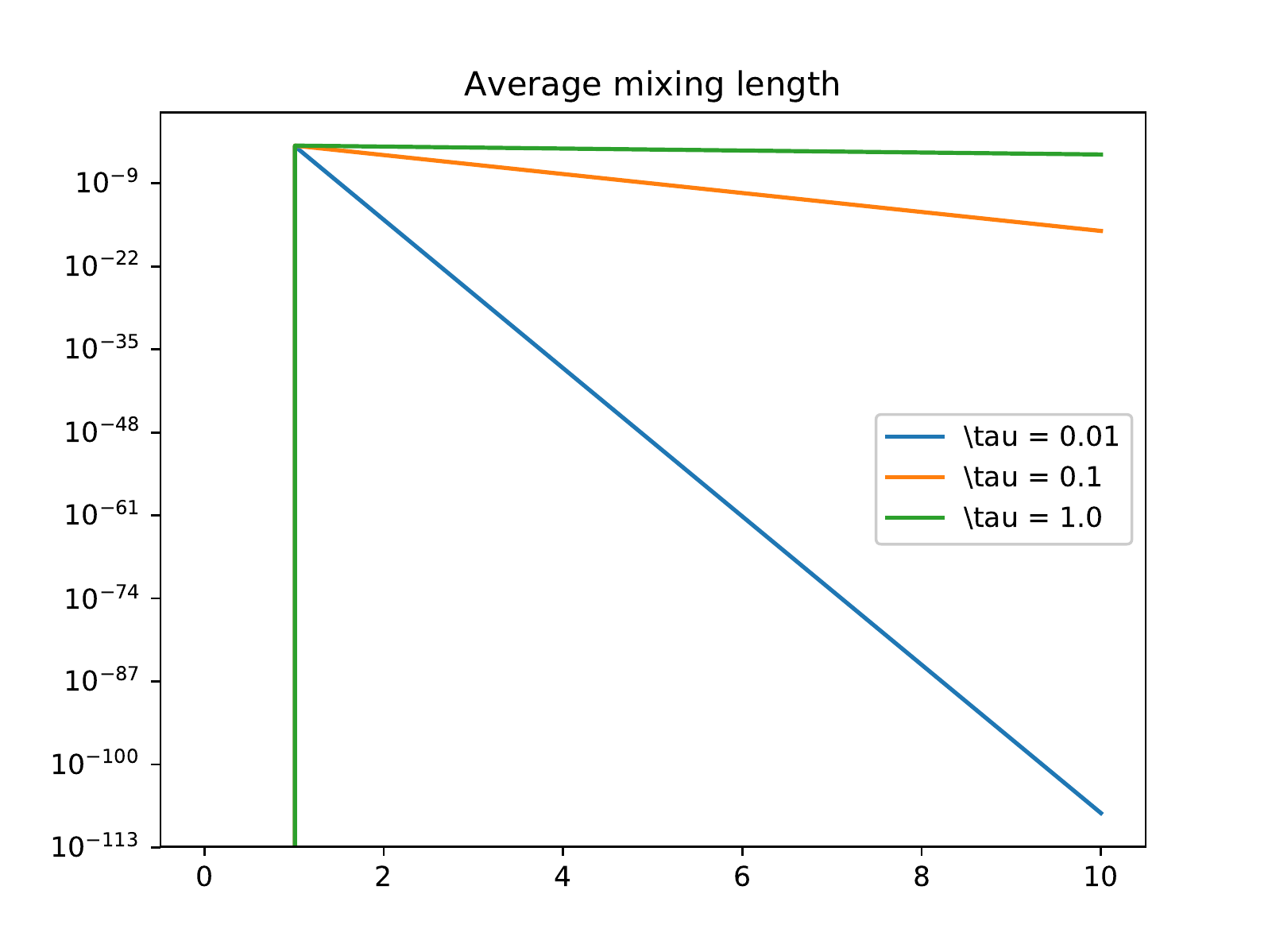}\label{figcomp:a}}
\subfloat[$avg{(l)}/L$ for $\tau=1,10,100$ and the static model.]{\includegraphics[width=.49\textwidth]{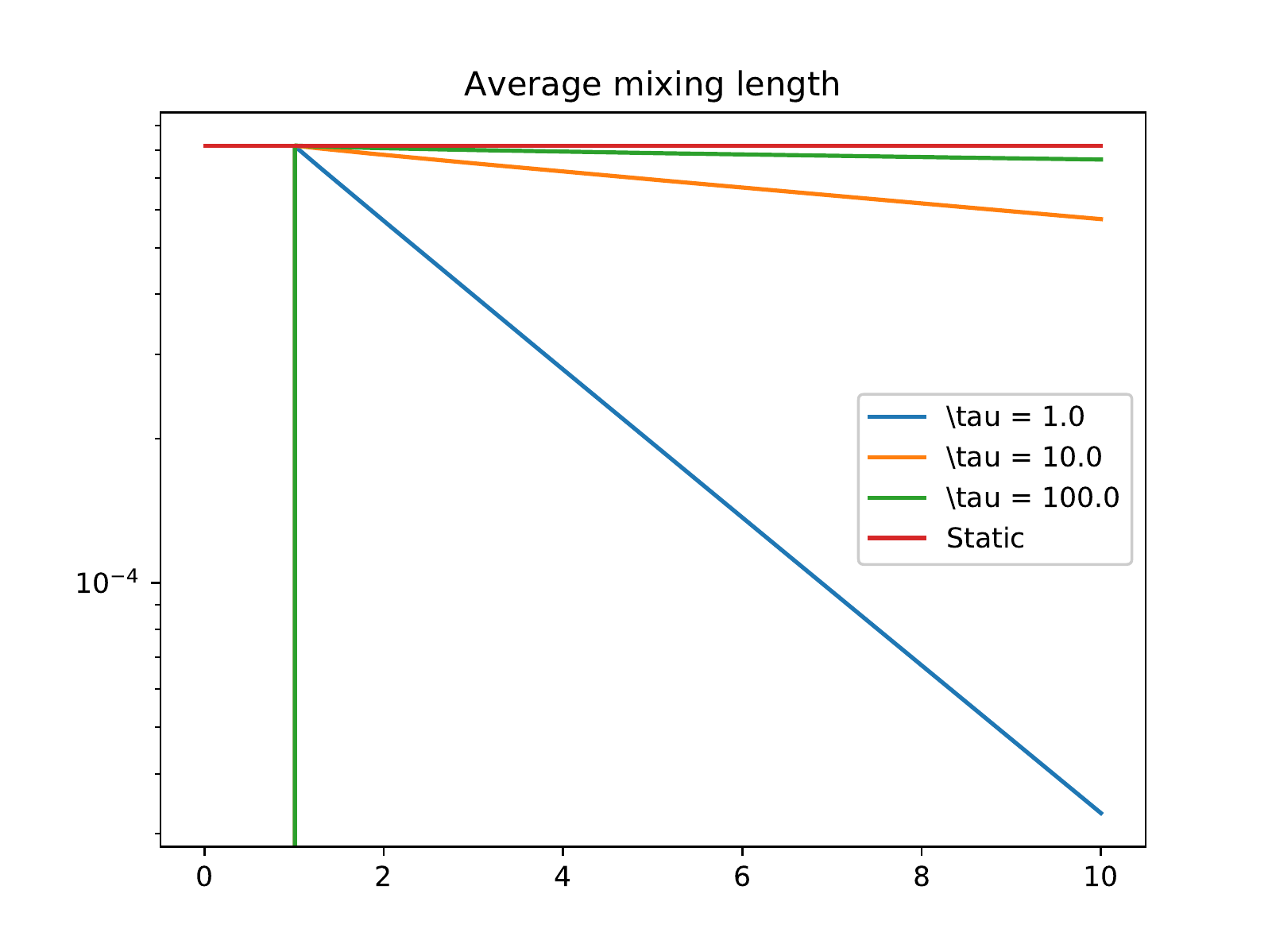}\label{figcomp:b}}
\caption{Average mixing length comparison}
\label{fig=comp}
\end{figure}Figure \ref{fig=comp} shows that decreasing values of $\tau$ lead
to a vanishing average mixing length, whereas increasing $\tau$ yields average
mixing lengths that appear to converge to the static mixing length.

Next, we give plots of the velocity magnitude and squared vorticity for the
kinematic model at $t=1,5,$ and $10$. \begin{figure}[t]
\subfloat[Velocity
$(t=1)$]{\includegraphics[width=.49\textwidth]{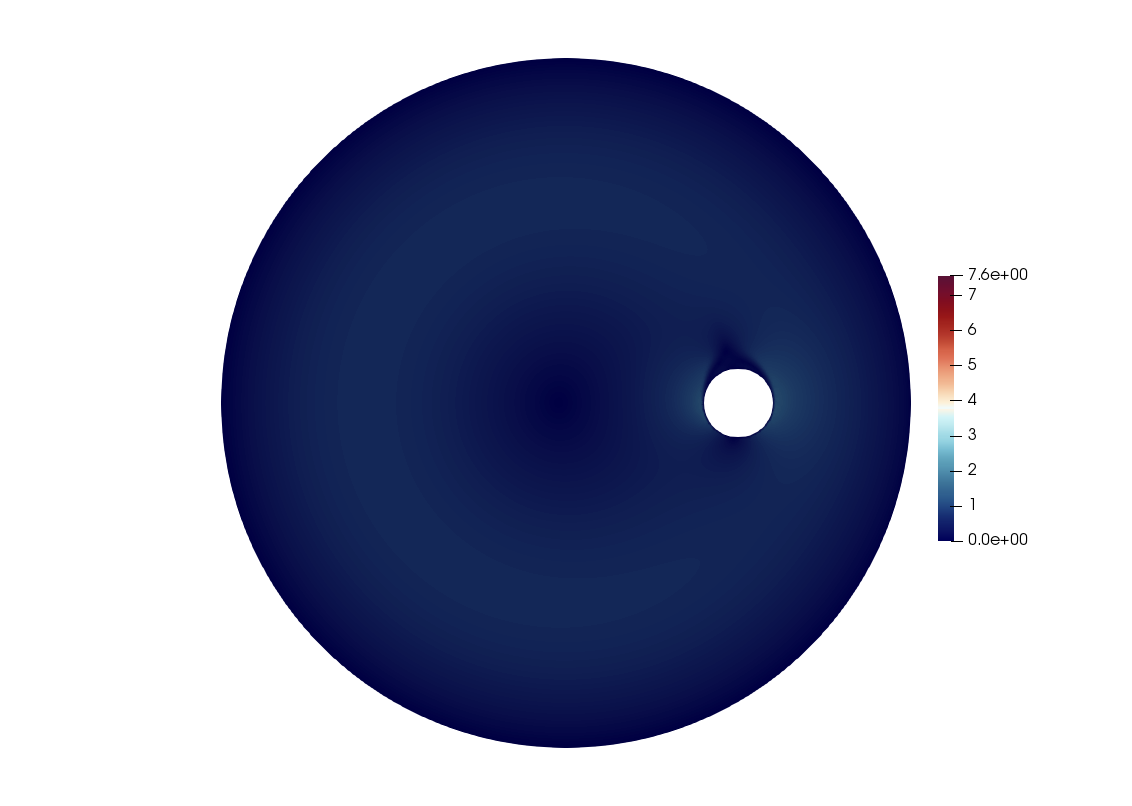}\label{fig31:a}}
\subfloat[Squared vorticity
$(t=1)$]{\includegraphics[width=.49\textwidth]{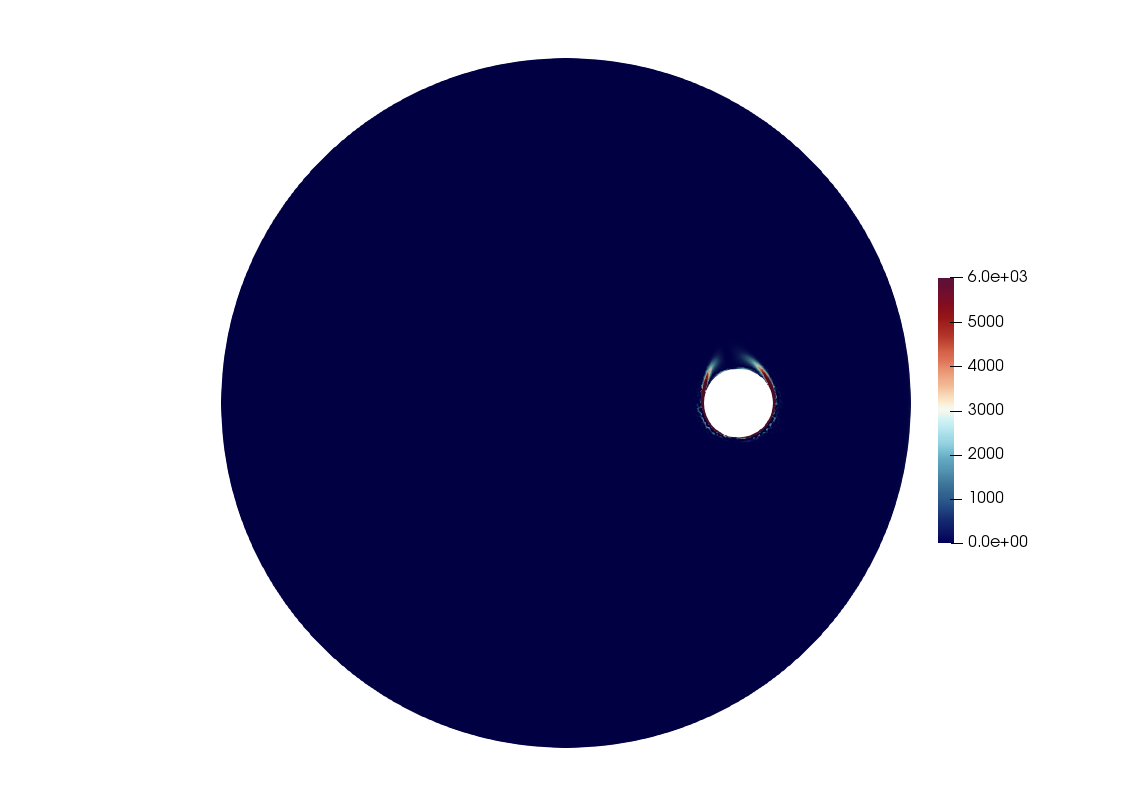}\label{fig31:b}}\newline
\subfloat[Velocity
$(t=5)$]{\includegraphics[width=.49\textwidth]{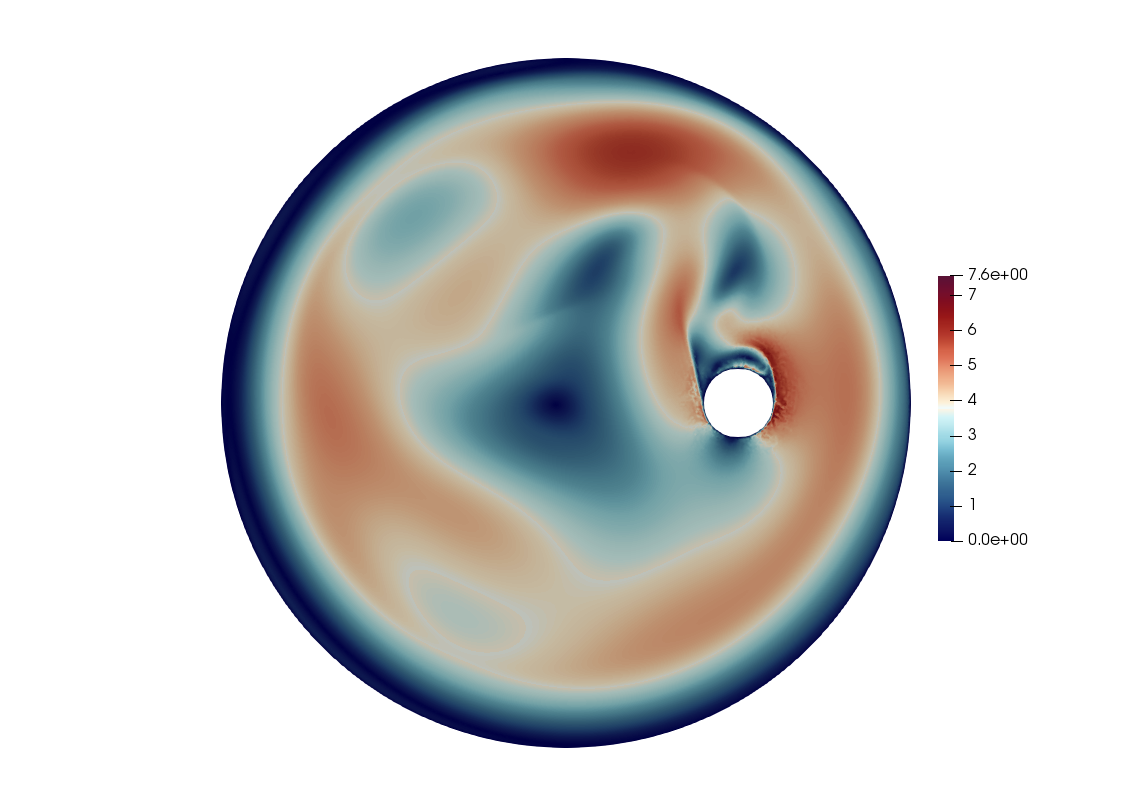}\label{fig31:c}}
\subfloat[Squared vorticity $(t=5)$]{\includegraphics[width=.49\textwidth]{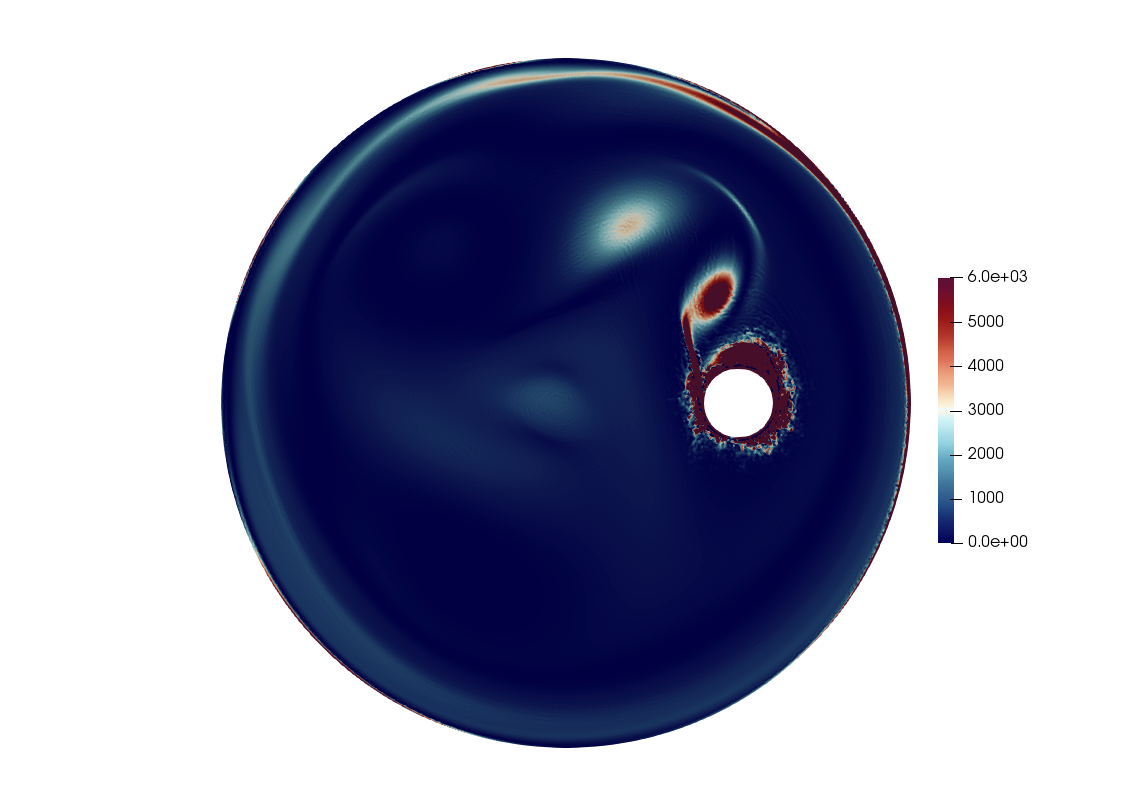}\label{fig31:d}}
\phantomcaption
\label{fig=flow1}\end{figure}\begin{figure}[t]
\ContinuedFloat
\subfloat[Velocity $(t=10)$]{\includegraphics[width=.49\textwidth]{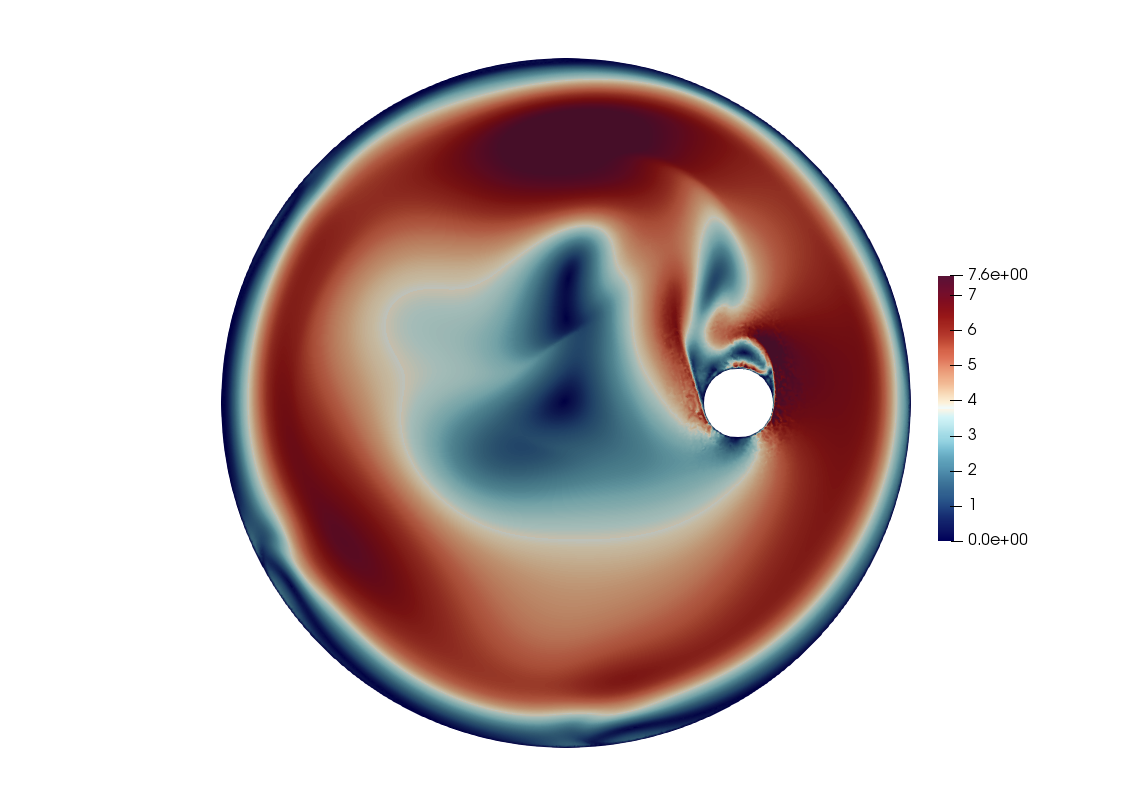}\label{fig31:e}}
\subfloat[Squared vorticity $(t=10)$]{\includegraphics[width=.49\textwidth]{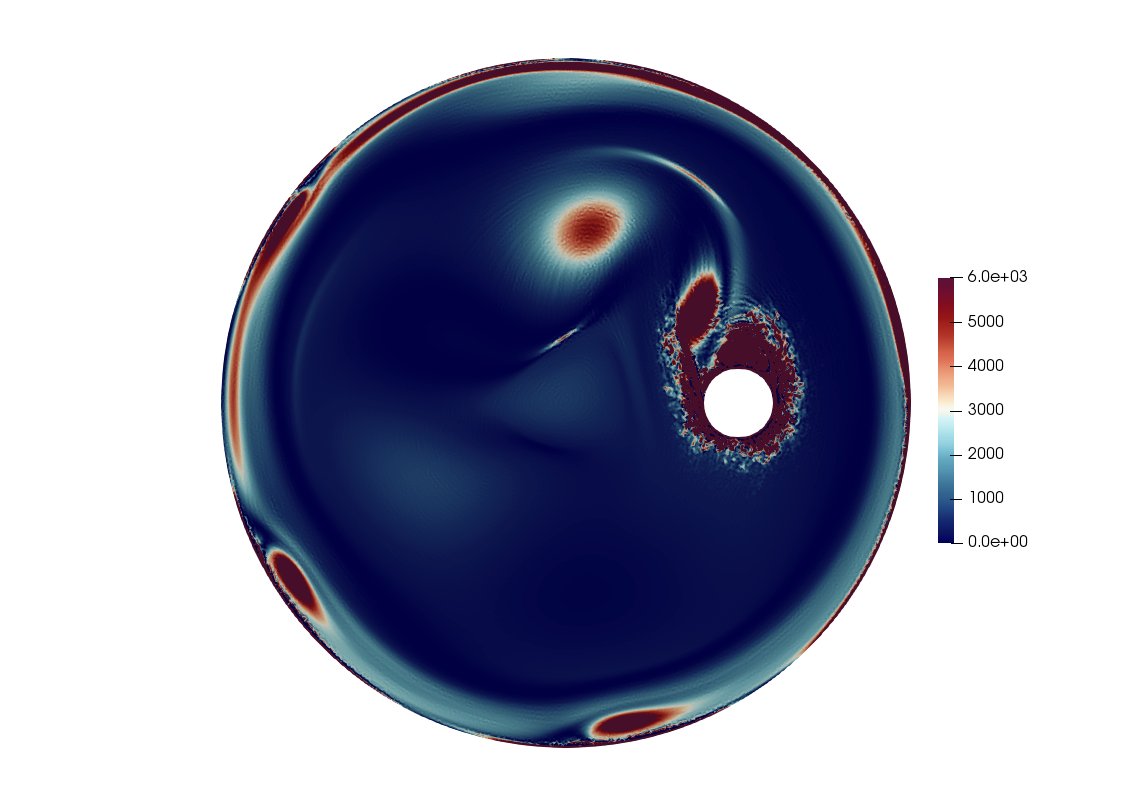}\label{fig31:f}}
\caption{Kinematic mixing length model velocity and vorticity.}
\label{fig=flow2}
\end{figure}

\subsection{Test 2: Flow between 3d offset cylinders}

The second test is a 3d analogue of the first. It shows similar differences in
the two models. Taking $\Omega_{1}$ to be the domain given in the first test,
we define $\Omega=\Omega_{1}\times(0,1)$, a cylinder of radius and height one
with a cylindrical obstacle removed. The domain $\Omega$ was discretized with
Delaunay tetrahedrons with a maximal mesh width of approximately $0.1$. As
before, we start the flow from rest $(v_{0}=(0,0,0)^{T})$ and let the
kinematic viscosity $\nu=0.0001$. The flow evolves via the body force
\[
f(x,y,z;t)=\min\{t,1\}(-4y(1-x^{2}-y^{2}),4x(1-x^{2}-y^{2}),0)^{T},
\]
and is observed over the time interval $(0,10]$, with $\Delta{t}=.05$ and the
initial conditions for $k$ being set in the same way as the first test. Below,
we present the evolution of the statistics introduced above.
\begin{figure}[t]
\subfloat[Model intensity
$I_{\text{model}}$]{\includegraphics[width=.49\textwidth]{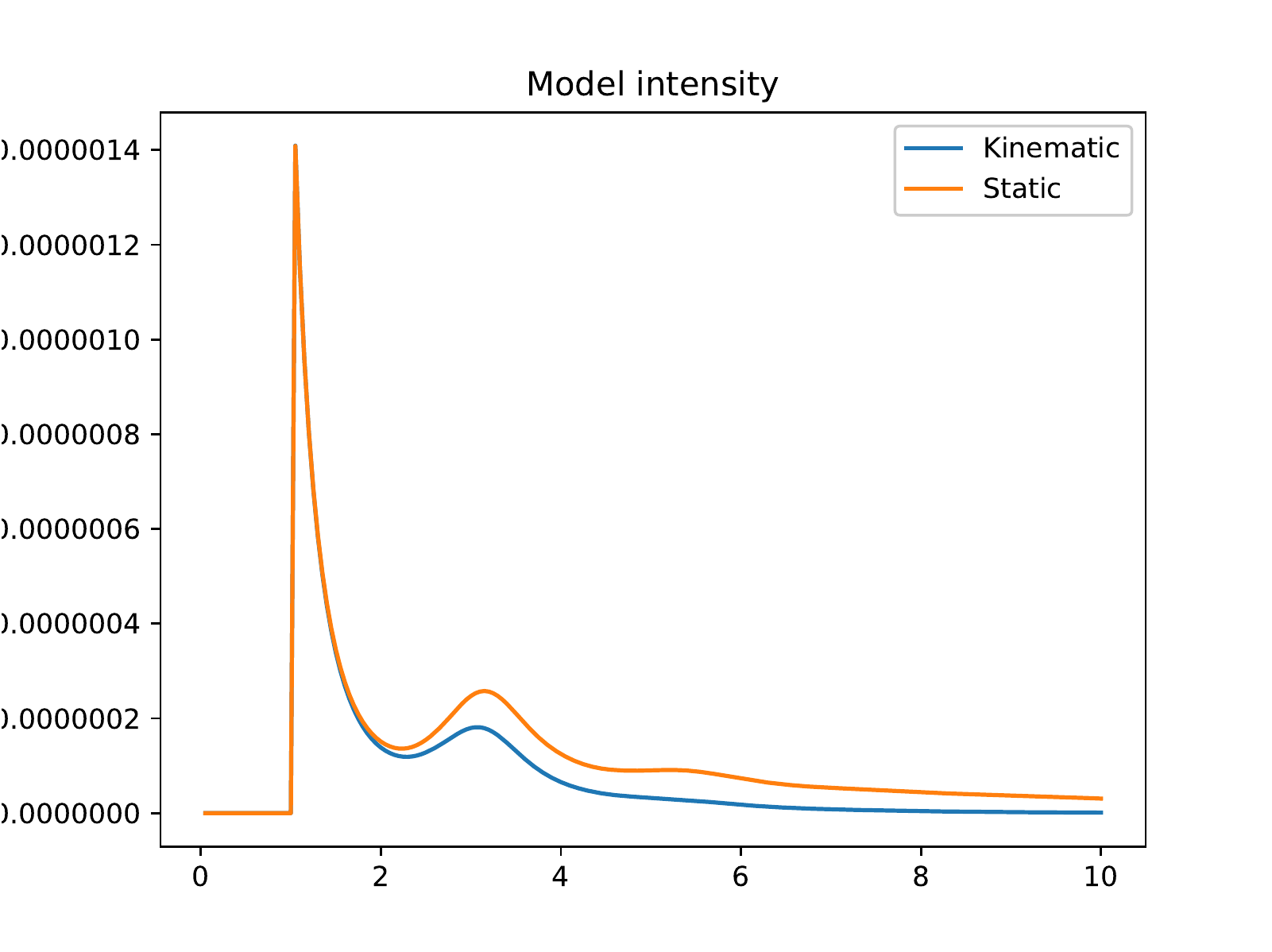}\label{fig3:a}}
\subfloat[Effective viscosity
$\nu_{\text{effective}}$]{\includegraphics[width=.49\textwidth]{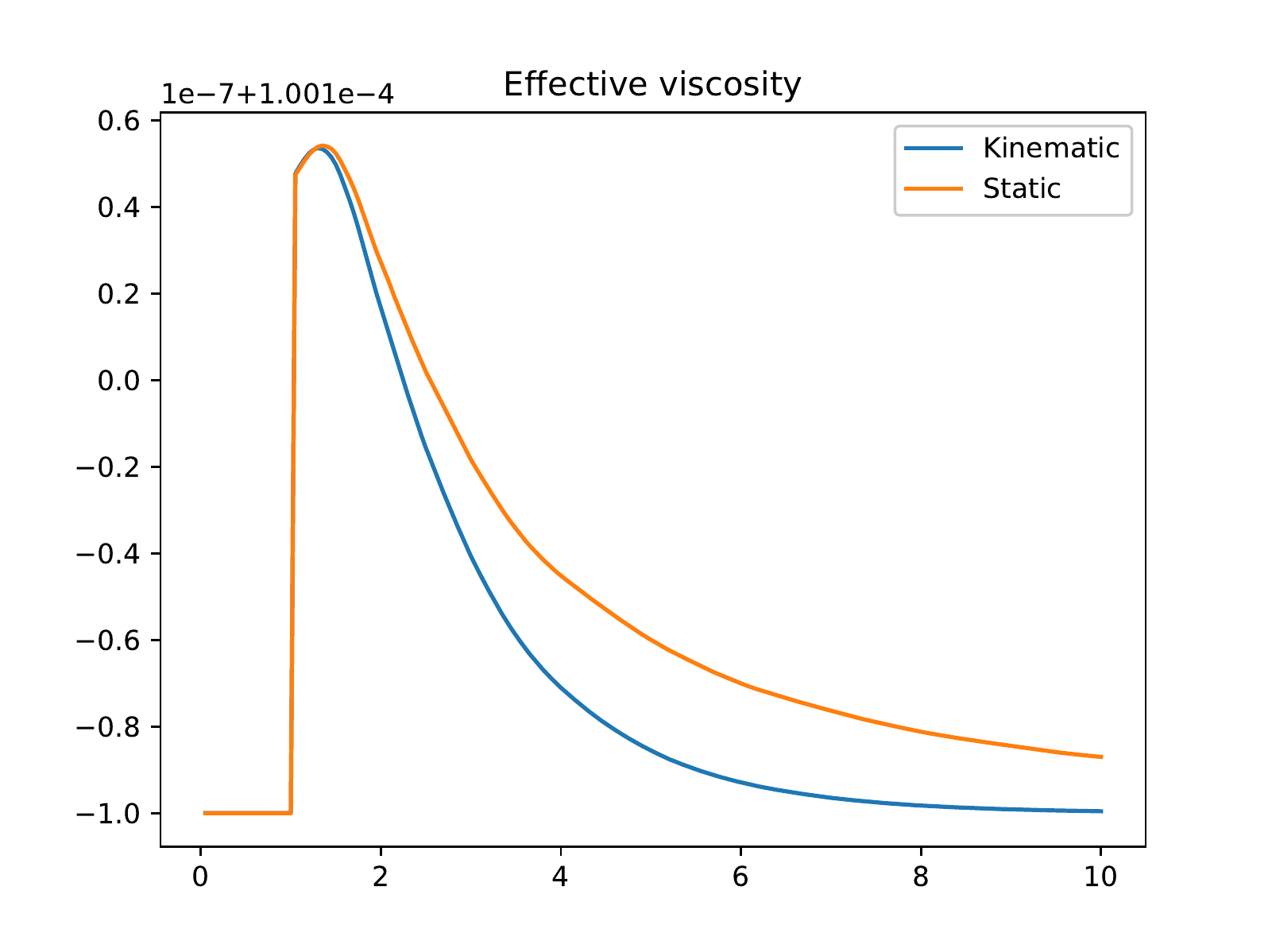}\label{fig3:b}}\newline%
\subfloat[Viscosity ratio
$VR_1$]{\includegraphics[width=.49\textwidth]{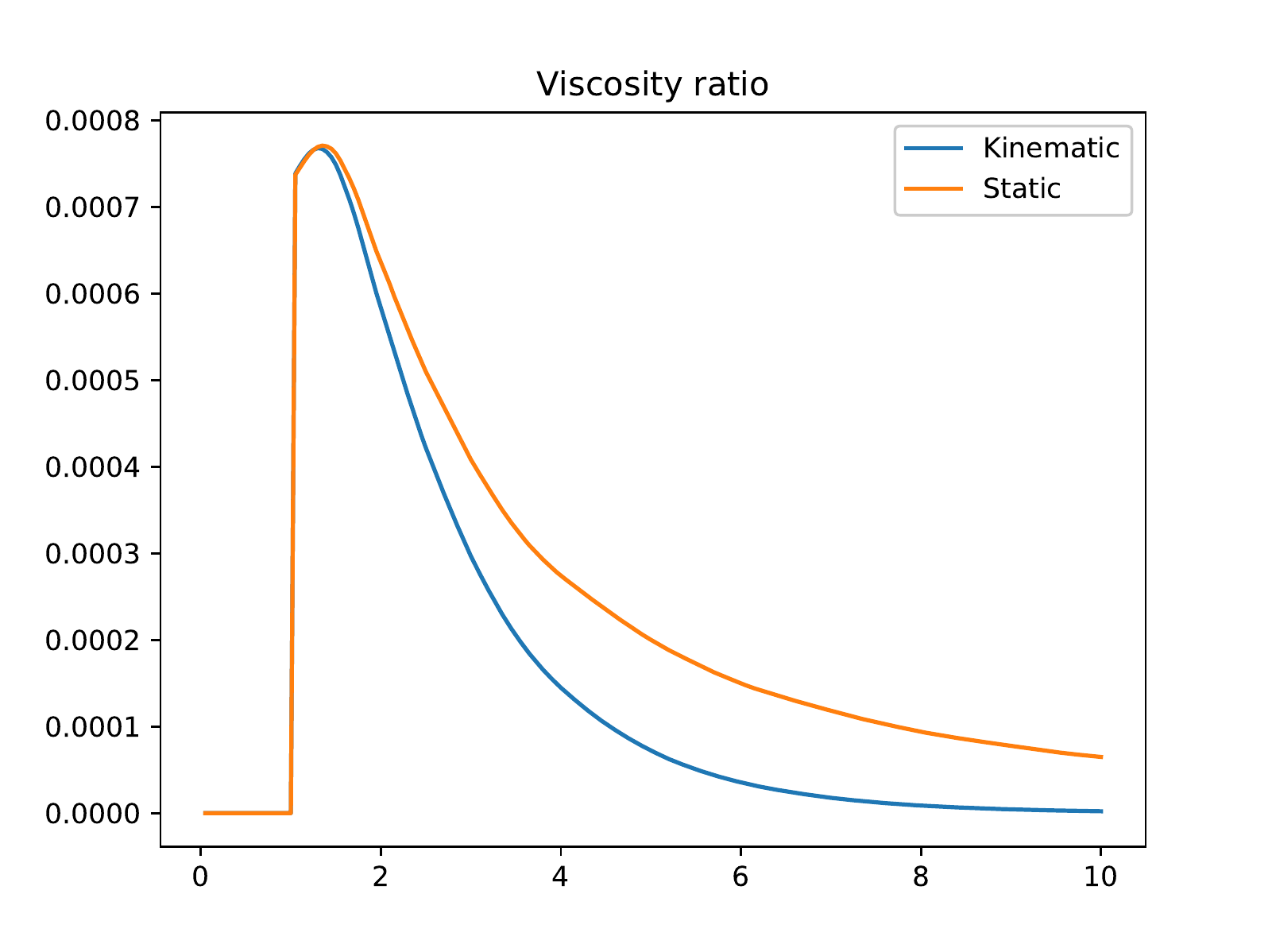}\label{fig3:c}}
\subfloat[Taylor microscale
$\lambda_{\text{Taylor}}$]{\includegraphics[width=.49\textwidth]{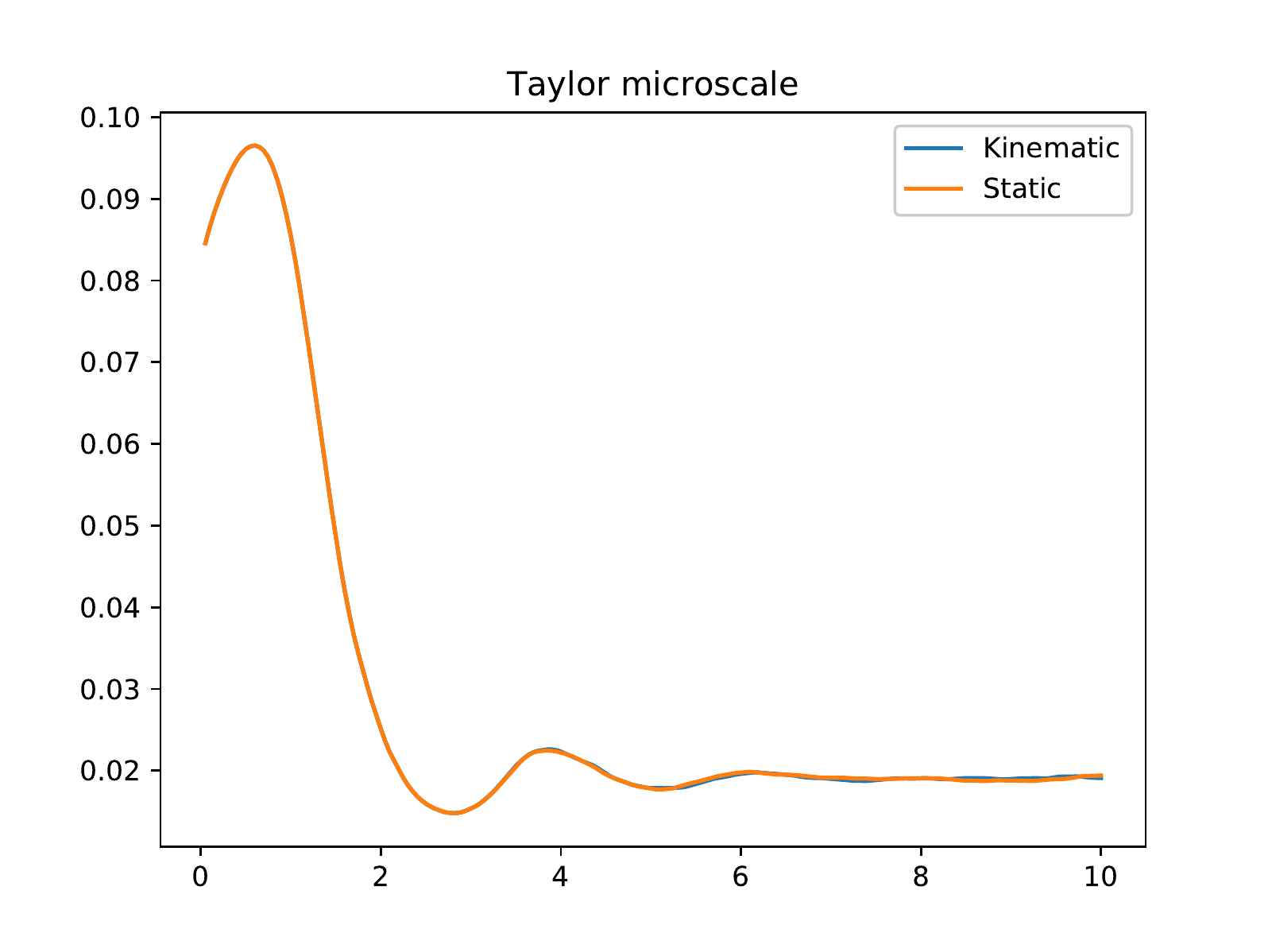}\label{fig3:d}}\newline
\subfloat[$avg{(l)}/L$]{\includegraphics[width=.49\textwidth]{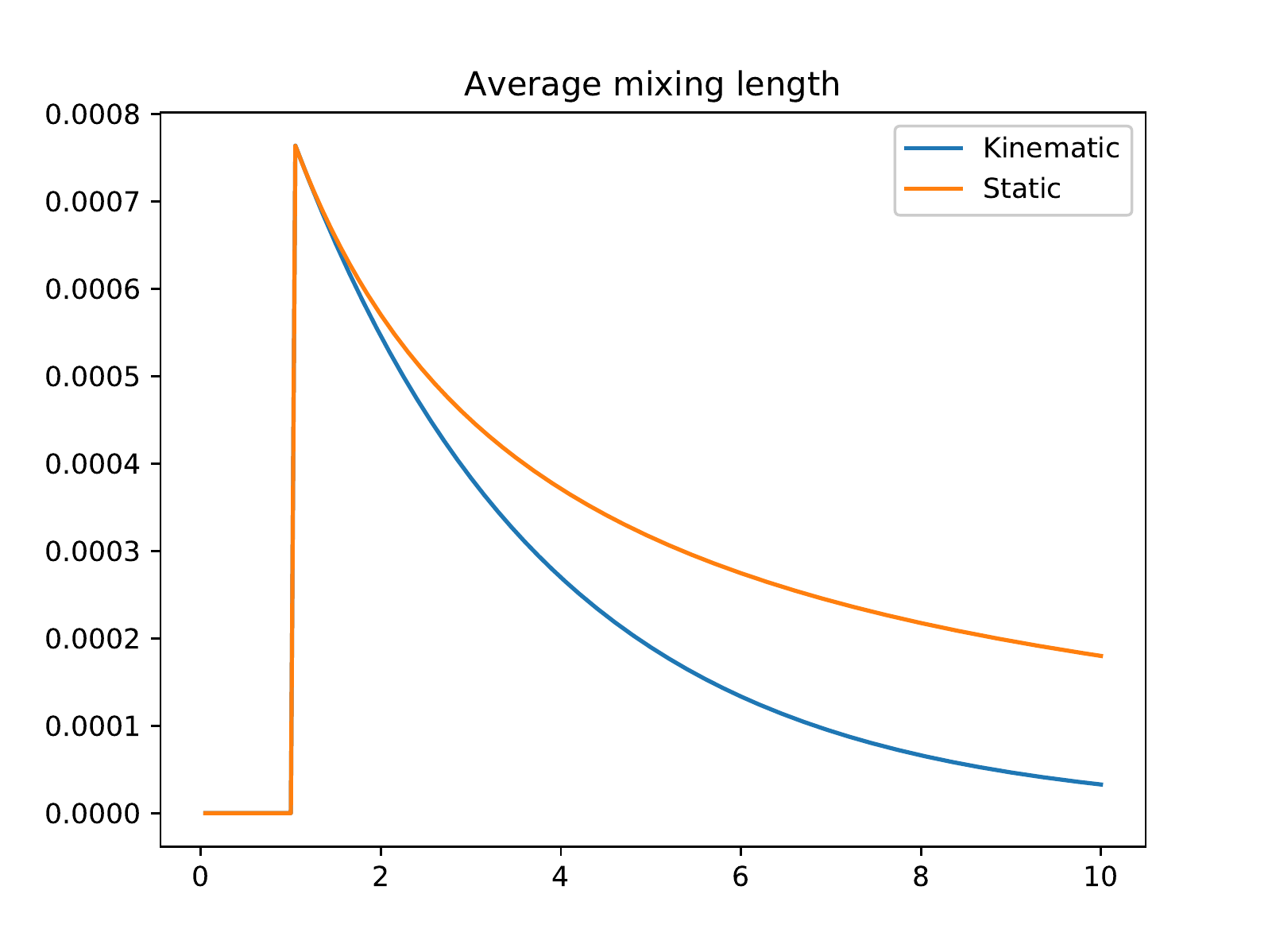}\label{fig3:e}}
\subfloat[$avg{(\nu_T)}/UL$]{\includegraphics[width=.49\textwidth]{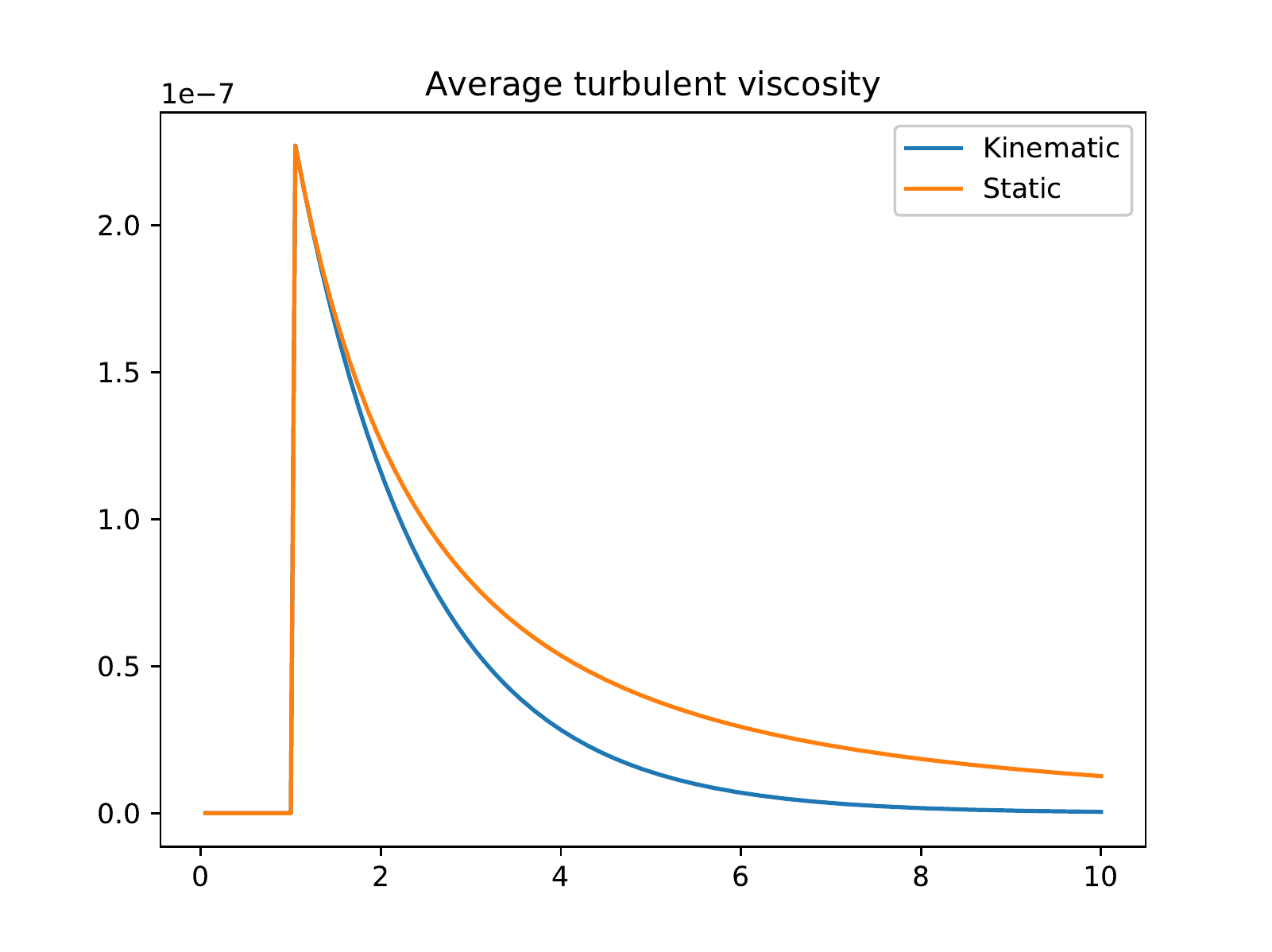}\label{fig3:f}}\caption{Flow
statistics for the 3d offset cylinder problem.}
\label{fig=stats_2}
\end{figure}

The statistics shown in Figure \ref{fig=stats_2} exhibit similar differences
between the 2 models as in the 2d case, Figs. \ref{fig3:a}--\ref{fig3:c},
\ref{fig3:e}--\ref{fig3:f}. As before, the evolution of the Taylor microscale
in Figure \ref{fig3:d} is similar in both models, with slight differences
appearing as the flow evolves. Here the Taylor microscale is much smaller for
the 3d test than the previous 2d test (even though the mesh is coarser). 

To conclude, we present streamline plots of the offset cylinder simulation as viewed from above. In the figures, color signifies the magnitude of velocity. At $t=1$, the flow appears laminar, and over the course of the simulation becomes turbulent, as evidenced by the plots at $t=5,10$. This behavior can be seen in Figure \ref{fig=3dcyl}, which views the domain from the positive $y$ direction and considers a slice at $z=.1$.
\begin{figure}[t]
\subfloat[$t=1$]{\includegraphics[width=.49\textwidth]{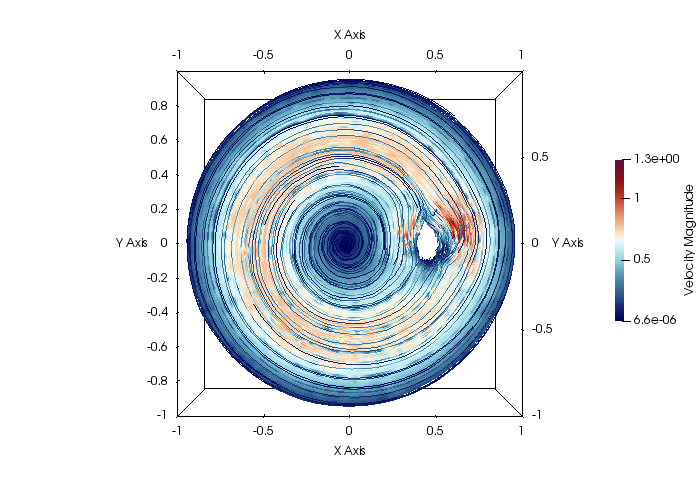}\label{fig4:a}}
\subfloat[$t=5$]{\includegraphics[width=.49\textwidth]{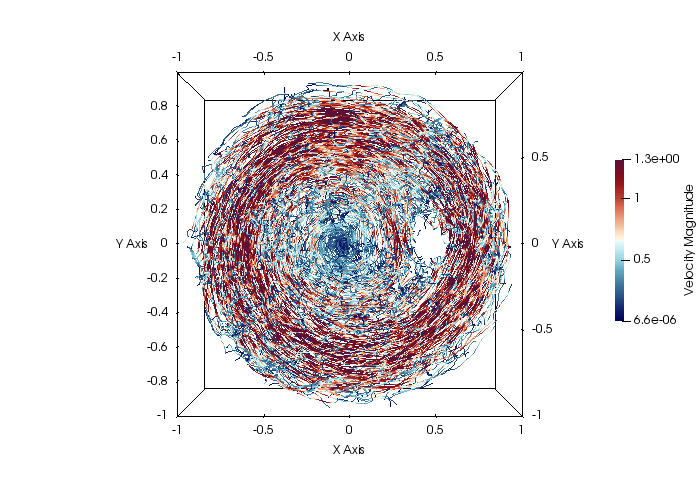}\label{fig4:b}}\newline
\subfloat[$t=10$]{\includegraphics[width=\textwidth]{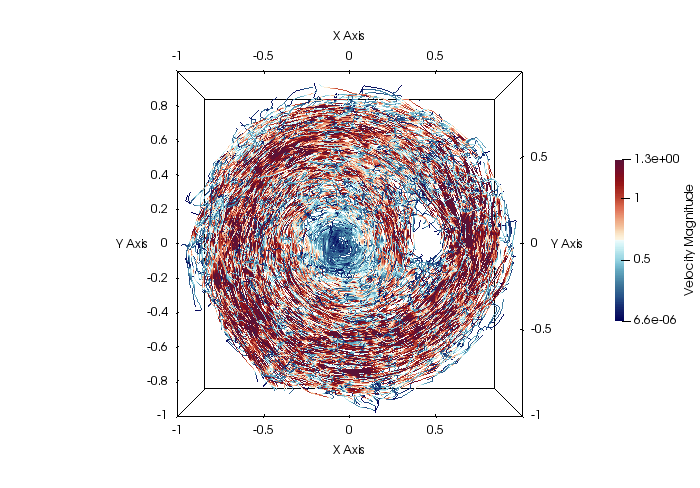}\label{fig4:c}}
\caption{Streamlines for the 3d offset cylinder problem.}
\label{fig=3dvel}
\end{figure}

\begin{figure}[t]
\subfloat[$t=1$]{\includegraphics[width=\textwidth]{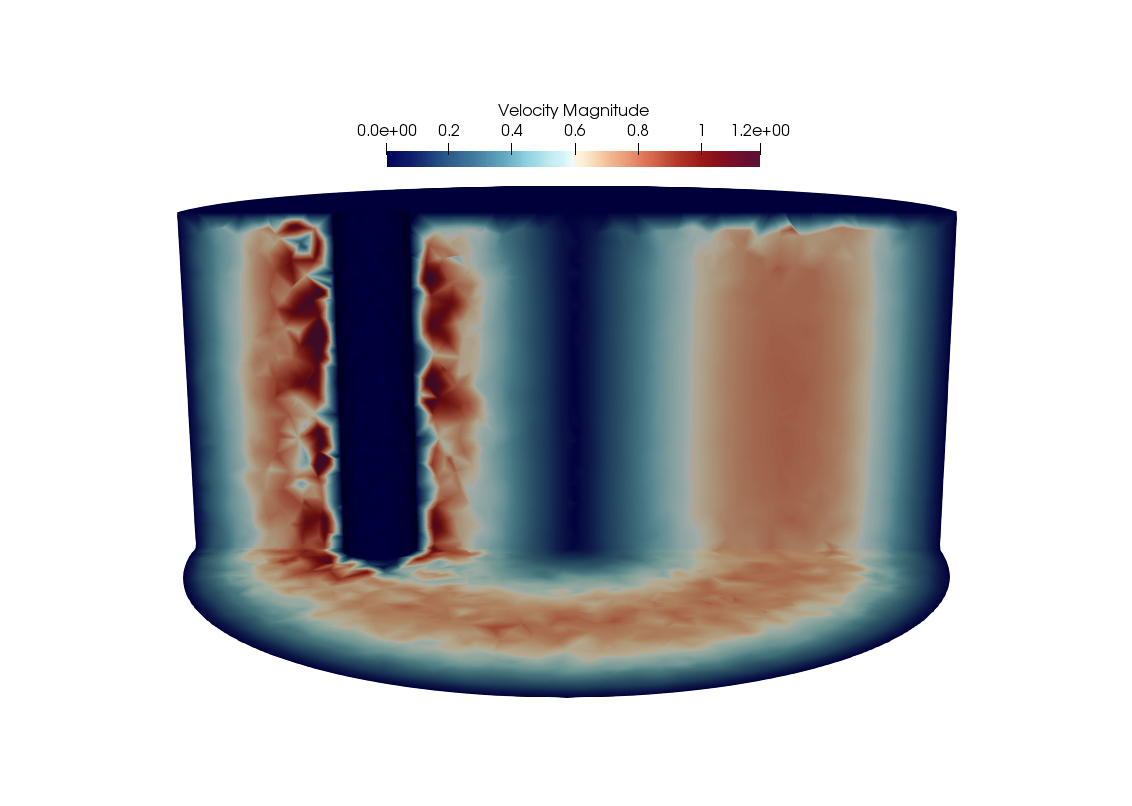}\label{fig5:a}}\newline
\subfloat[$t=5$]{\includegraphics[width=.49\textwidth]{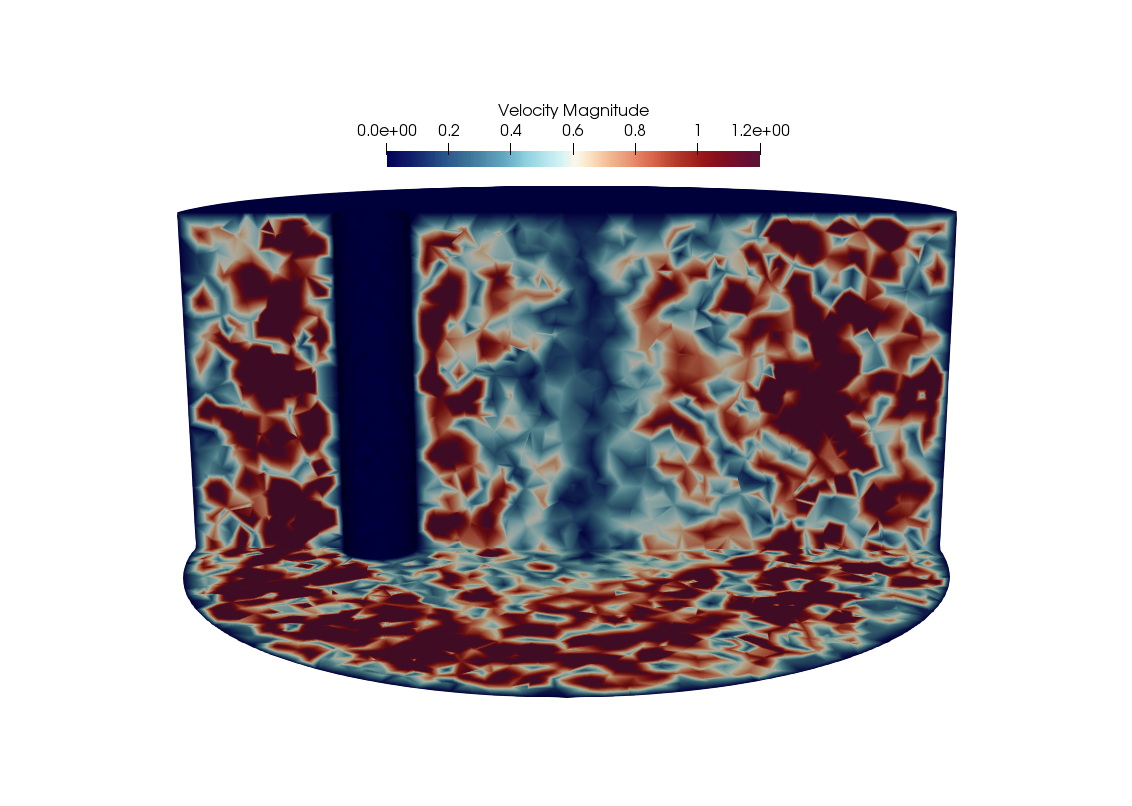}\label{fig5:b}}
\subfloat[$t=10$]{\includegraphics[width=.49\textwidth]{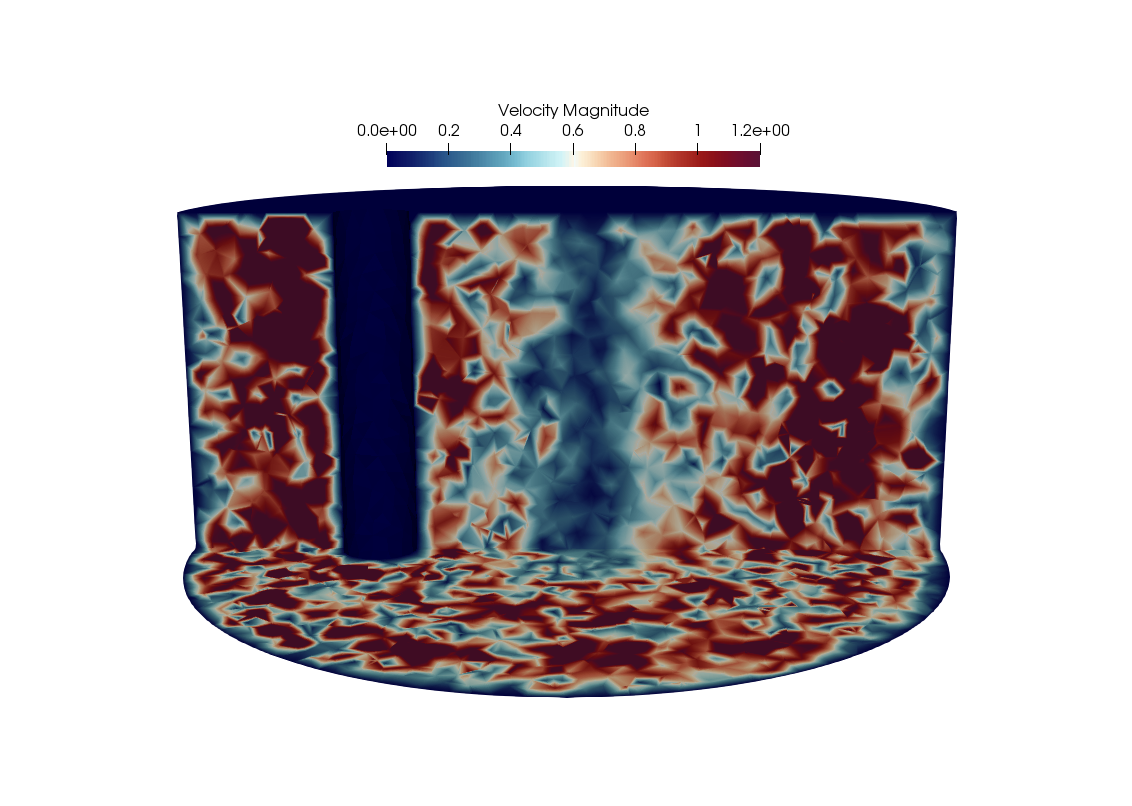}\label{fig5:c}}
\caption{Velocity magnitude for the 3d offset cylinder problem.}
\label{fig=3dcyl}
\end{figure}

\section{Conclusions and open problems}

Predictive simulation of turbulent flows using a URANS model requires some
prior knowledge of the flow to calibrate the model and side conditions. Our
intuition is that \textit{the better the model represents flow physics the
less complex this calibration will be}. To this end we have suggested a simple
modification of the standard $1-$equation model that analysis shows better
represents flow physics.

In turbulence, it is of course easier to list open problems than known facts.
However, there are a few within current technique for the modified model herein.

\begin{itemize}
\item Extension of estimates of $\left\langle \varepsilon_{\text{model}%
}\right\rangle $\ to turbulent shear flows is open and would give insight into
near wall behavior. Various methods for reducing the turbulent viscosity
locally in regions of persistent, coherent structures have been proposed,
e.g., \cite{V04}, \cite{LRT12} . Sharpening the (global) analysis of
$\left\langle \varepsilon_{\text{model}}\right\rangle $ for these (local)
schemes would be a significant breakthrough.

\item Extension of an existence theory to the modified model is another
important open problem. Our intuition is that existence will hold but there
may always occur hidden difficulties.

\item The estimate in Theorem 1 requires an upper limit on the time average's
window of $\tau/T^{\ast}\leq\mu^{-1/2}$. We do not know if a restriction of
this type can be removed through sharper analysis or if there exists a
fundamental barrier on the time average's window. Connected with this
question, the behavior of the model as $\tau\rightarrow\infty$\ is an open problem.

\item Eddy viscosity models do not permit transfer of energy from fluctuations
back to means. Recently in \cite{JL16} an idea for correcting these features
of eddy viscosity models was developed. Extension to the present context would
be a significant step forward in model accuracy.

\item Various averages of the classic turbulence length scale with the
kinematic one proposed herein are possible, such as the geometric average%
\[
l_{\theta}(x,t)=l_{0}^{\theta}(x)l_{K}^{1-\theta}(x,t).
\]
It is possible that such a weighted combination will perform better than
either alone. For example, for decaying turbulence when $v=0,\nabla v=0$ the
$k-$equation reduces to
\[
k_{t}+\frac{1}{l_{\theta}}k\sqrt{k}=0.
\]
Decaying turbulence experiments in 1966 of Compte-Bellot-Corsin, e.g., p.56-57
in \cite{MP}, suggest polynomial decay as $k(t)=$ $k(0)\left(  1+\lambda
t\right)  ^{-1.3}$. Neither mixing length formula replicates this decay. But
choosing $\theta=\frac{2}{1.3}\simeq1.54$ yields polynomial decay with
exponent $-1.3$. The effect of this data-fitting on the predictive power of
the model and on the Conditions 1-4 are an open problem.

\item Our intuition is that for many tests numerical dissipation is greater
than model dissipation (and acts on different features and scales of those
features). Thus the analysis of numerical dissipation including time
discretizations is an important open problems. 

\item Comparative test on problems known to be challenging for RANS and\\
URANS models is an important assessment step.
\end{itemize}

\end{document}